\documentclass[11pt]{amsart}

\usepackage{graphicx}
\usepackage{amssymb}
\usepackage{epstopdf}
\usepackage{amsmath}
\usepackage{amscd}
\usepackage{amssymb}
\usepackage{amsfonts}

\usepackage{latexsym}
\usepackage{verbatim}
\usepackage{epsfig}
\usepackage[all]{xy}

\usepackage{color}
\definecolor{DarkRed}{rgb}{0.8,0.04,0.02}
\definecolor{DarkGrey}{rgb}{0.4,0.4,0.4}

\DeclareGraphicsRule{.tif}{png}{.png}{`convert #1 `dirname #1`/`basename #1 .tif`.png}

\numberwithin{equation}{section}
\newtheorem{lemma}[equation]{Lemma}
\newtheorem{proposition}[equation]{Proposition}
\newtheorem{theorem}[equation]{Theorem}
\newtheorem{corollary}[equation]{Corollary}



\theoremstyle{remark}

\newtheorem{remark}[equation]{Remark}
\newtheorem{definition}[equation]{Definition}

\newcommand\eps{\epsilon}

\newcommand\ZZ{\mathbb Z}
\newcommand\NN{\mathbb N}

\newcommand\Prob{{\mathbb P}}

\newcommand\pe{\mathbb P_{\textnormal{erg}}}

\newcommand\pen{\mathbb P_{\textnormal{erg}}'} 

\begin{document}

\title{Borel Isomorphism of SPR Markov Shifts}

\author{Mike Boyle}
\address{Department of Mathematics - University of Maryland}
\email{mmb@math.umd.edu}
\author{J\'er\^ome Buzzi}
\address{Laboratoire de Math\'ematiques d'Orsay (CNRS, UMR 8628) - Universit\'e Paris-Sud}
\email{jerome.buzzi@math.u-psud.fr}

\author{Ricardo G\'omez}
\address{Instituto de Matem\'aticas - Universidad Nacional Aut\'onoma de M\'exico
}
\email{rgomez@math.unam.mx}




\begin{abstract}
We show that strongly positively recurrent Markov shifts (including shifts of finite type) are classified up to Borel conjugacy by their entropy, period and their numbers of periodic points.
\end{abstract}

\maketitle



\section{Introduction}

Theorem \ref{Hochiso} below is one of the results in 
the \lq\lq full sets\rq\rq\    paper of Hochman
\cite{Hochman}. In the statement, 
\lq Markov shift\rq\    means countable state 
Markov shift. The free part of a Borel system is the 
subsystem obtained  by restriction to the nonperiodic points, 
and a full subset is an invariant subset 
of measure one for every
invariant Borel probability measure. 
Two Borel systems are {\it almost-Borel isomorphic} if they are 
Borel isomorphic after restriction to full subsets of their free parts. 
Detailed definitions for the Introduction are given in the next section.

\begin{theorem} \cite{Hochman} \label{Hochiso} 
Two mixing Markov shifts are almost-Borel  isomorphic if and
only if (1) they have equal entropy and (2) one has a measure 
of maximum entropy if and only the other does.  
\end{theorem}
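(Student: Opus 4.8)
Here is the strategy I would follow; it is that of Hochman \cite{Hochman}. The two implications are very different in character. Necessity is soft: an almost-Borel isomorphism is, by definition, a Borel isomorphism between full subsets of the free parts, and such a map carries across all the dynamically meaningful invariants. Sufficiency is the substantial direction, and I would deduce it --- via a Borel Schr\"oder--Bernstein argument --- from the existence of mutual almost-Borel embeddings (Borel embeddings of a full subset of the free part of one shift into the free part of the other), the embeddings themselves being supplied by an almost-Borel analogue of Krieger's generator theorem with Markov shift targets.

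\emph{Necessity.} Suppose $\phi\colon X_0\to Y_0$ is a Borel isomorphism between full subsets of the free parts of mixing Markov shifts $X$ and $Y$. An invariant probability measure can carry atoms only at periodic points, so a non-atomic invariant measure gives mass one to the free part, hence mass one to any full subset of it; therefore $\mu\mapsto\phi_*\mu$ is an affine, entropy-preserving bijection between the non-atomic invariant probability measures of $X$ and those of $Y$. By the variational principle for countable Markov shifts, $h(X)=\sup_\mu h(\mu)$, and in the positive-entropy case this supremum equals the supremum over non-atomic measures and is attained exactly when there is a measure of maximal entropy (which is then non-atomic). Hence $\phi$ forces $h(X)=h(Y)$ and forces one shift to have a measure of maximal entropy precisely when the other does; the entropy-zero case is trivial because a topologically mixing Markov shift of zero entropy is a single fixed point. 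This is (1)--(2).

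\emph{Sufficiency.} I would first invoke the Borel Schr\"oder--Bernstein property --- in the form adapted to the shift actions and to restriction to free parts --- that if $X$ almost-Borel embeds into $Y$ and $Y$ almost-Borel embeds into $X$, then $X$ and $Y$ are almost-Borel isomorphic. Since (1)--(2) are symmetric in $X$ and $Y$, it suffices to show that $h(X)=h(Y)$ together with the equivalence of ``having a measure of maximal entropy'' implies $X\embedsinto Y$. The engine is: \emph{a free Borel system $Z$ almost-Borel embeds into a mixing Markov shift $Y$ provided either (a) every invariant probability measure $\mu$ on $Z$ has $h(\mu)<h(Y)$, or (b) $Z$ has a unique invariant measure of maximal entropy $\mu_Z$, $h(Z)=h(Y)$, $Y$ has a measure of maximal entropy $\mu_Y$, and $(Z,\mu_Z)$ is measure-theoretically isomorphic to $(Y,\mu_Y)$.} One proves this by exhausting $Y$ from within by mixing shifts of finite type with entropies increasing to $h(Y)$, and coding $Z$ into them by Borel marker and tower constructions --- including a Borel Rokhlin lemma that must also cover the part of $Z$ supporting no invariant probability measure --- together with Krieger's classical embedding theorem; the periodic-point side conditions of the finite-state embedding theorems disappear here because everything happens on free parts.

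Granting the embedding theorem, sufficiency is quick. If neither $X$ nor $Y$ has a measure of maximal entropy then, since $h(X)=h(Y)$, every invariant measure on $X$ has entropy strictly below $h(Y)$, and alternative (a) applies. If both have one, call them $\mu_X$ and $\mu_Y$; by Gurevich's theory each is a Markov measure of entropy $h(X)=h(Y)$ and is unique, and --- being carried by a topologically mixing Markov shift --- it is measure-theoretically mixing, hence Bernoulli by Friedman--Ornstein, so $\mu_X$ and $\mu_Y$ are isomorphic by Ornstein's theorem. Thus alternative (b) applies and gives $X\embedsinto Y$. Exchanging $X$ and $Y$ and invoking Schr\"oder--Bernstein completes the proof. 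The main obstacle is the embedding theorem itself: the difficulty is to run the symbolic coding uniformly over the whole simplex of invariant measures of $Z$ --- rather than one ergodic measure at a time, as in the classical Krieger theorem --- while staying in the Borel category and accommodating systems that carry no invariant probability measure, which is exactly where Borel combinatorics (marker lemmas, the Borel Rokhlin lemma) must be married to the symbolic machinery of shifts of finite type.
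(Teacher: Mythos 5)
This theorem is quoted from Hochman and the paper offers no proof of its own beyond the remark that it follows from the embedding theorem (Theorem \ref{Hochmanembed}) together with the Borel Cantor--Bernstein argument; your proposal reconstructs exactly that strategy (soft necessity, mutual almost-Borel embeddings plus Schr\"oder--Bernstein for sufficiency, embeddings from a uniform Krieger-type generator theorem). Your treatment is in fact slightly more careful than the paper's phrasing, since you correctly note that when both shifts carry measures of maximal entropy one must peel off the m.m.e.\ and match it via Gurevich--Ornstein theory, a case not covered by the strict-inequality hypothesis of Theorem \ref{Hochmanembed} alone.
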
 

An important observation \cite{Hochman} in this 
setting is that 
two Borel systems that embed 
each into the other are Borel isomorphic, by 
a Borel variant of
Cantor-Bernstein Theorem (a.k.a.\ the 
measurable Schr\"oder-Bernstein Theorem). 
 Consequently 
Theorem \ref{Hochiso} was an immediate corollary of 
 the following embedding theorem.

\begin{theorem}\cite{Hochman}\label{Hochmanembed}   Suppose $(Y,T)$ is a mixing 
Markov shift and $(X,S)$ is a Borel system such that 
$h(S,\mu )< h(T)$ for every ergodic invariant Borel probability
$\mu$
on $X$. Then there is an 
almost-Borel 
embedding of $(X,S)$ into 
$(Y,T)$. 
\end{theorem}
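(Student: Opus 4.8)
The plan is to reduce the general mixing--Markov--shift target to mixing shifts of finite type (SFTs) of entropy arbitrarily close to $h(T)$, and to settle the SFT case by a Borel version of Krieger's generator theorem. We may assume $X$ is free: an almost-Borel embedding involves only the free parts, and the periodic part of $X$ is disjoint from every full subset of its free part. Recall also that for the mixing Markov shift $Y$ the Gurevich entropy $h(T)$ is the supremum of the topological entropies of its finite-subgraph SFTs, and (using mixing of $Y$ to adjoin a short loop) these may be taken mixing; so for each $a<h(T)$ there is a mixing SFT $\Sigma\subseteq Y$ with $h(\Sigma)>a$.

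First I would remove the non-uniformity in the hypothesis. Fix reals $a_k\uparrow h(T)$ with $a_k<h(T)$ for all $k$. The Borel ergodic decomposition $x\mapsto\mu_x$ is defined and Borel on a $T$-invariant full Borel subset of $X$, and $\mu\mapsto h(S,\mu)$ is Borel on invariant probability measures; pulling back the Borel partition of the ergodic measures into the classes $\{a_{k-1}\le h(S,\cdot)<a_k\}$, one gets a partition, modulo a null set, $X=\bigsqcup_{k\ge 1}X_k$ into $T$-invariant Borel sets such that every ergodic invariant measure carried by $X_k$ has entropy $<a_k$. Thus on each piece we have a genuine entropy gap below $h(T)$.

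The core is the SFT case: if $(X',S)$ is a free Borel system all of whose ergodic invariant measures have entropy $<h(\Sigma)$ for a mixing SFT $\Sigma$, then $(X',S)$ admits an almost-Borel embedding into $\Sigma$. I would prove this by a Borel analogue of Krieger's generator theorem, built iteratively. One constructs finer and finer Borel codings $\phi_n\colon X'\to(\text{alphabet of }\Sigma)$ with induced equivariant maps $\Phi_n\colon X'\to\Sigma$: at stage $n$, use the Borel Rokhlin lemma to produce a marker set cutting $\mu$-almost every $S$-orbit (for all invariant $\mu$ simultaneously) into consecutive blocks of a large length $m_n$, recode each block into a legal $\Sigma$-word --- splicing blocks with short connecting words, allowed because $\Sigma$ mixes --- in a way that refines $\phi_{n-1}$. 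The entropy condition enters through the Shannon--McMillan--Breiman theorem: for each ergodic $\mu$, $\mu$-almost every orbit has at most $e^{m_n(h(S,\mu)+o(1))}$ block-names that must be kept distinct, while $\Sigma$ has roughly $e^{m_n h(\Sigma)}$ words of length $m_n$, leaving ample room to recode injectively. Choosing the $\Phi_n$ to converge to an injective Borel equivariant map defined on a full set --- using freeness to separate any pair of points that a given coding fails to separate --- gives the embedding $X'\hookrightarrow\Sigma$. I expect this to be the main obstacle: the recoding must be simultaneously correct for all ergodic measures even though SMB convergence is not uniform in $\mu$, so one works over an increasing sequence of Borel ``good sets'' with full union and must control the limit to keep the final map Borel and globally injective; ensuring the image actually lands in $\Sigma$ and that distinct orbits receive distinct points requires further care.

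Finally I would glue. For each $k$ pick a mixing SFT $\Sigma_k\subseteq Y$ with $h(\Sigma_k)>a_k$; the SFT case embeds $X_k$ almost-Borel into $\Sigma_k\subseteq Y$. To combine these into a single Borel embedding of the full subset $\bigsqcup_k X_k$ of $X$ into $Y$, I make the images pairwise disjoint by reserving disjoint syntactic regions of $Y$ for the different levels: code level $k$ so that every point of its image carries a recognizable level-$k$ signature built from distinguished words of $\Sigma_k$ and carries no signature of any other level. Excluding the offending words costs only a vanishing amount of entropy, so $h(\Sigma_k)$ stays above $a_k$, and the sets of points bearing each signature are disjoint Borel sets containing the respective images. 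This last bookkeeping is routine but tedious.
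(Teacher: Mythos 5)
The paper does not prove this statement: Theorem \ref{Hochmanembed} is quoted from Hochman's ``full sets'' paper, and the authors explicitly describe his method only in one sentence --- a ``bottom-up'' construction via a uniform version of the Krieger Generator Theorem. Your outline coincides with that strategy: restrict to the free part, stratify $X$ by entropy using the ergodic decomposition, exhaust $h(T)$ from below by mixing SFT subsystems of $Y$, code each stratum into one of these SFTs by an iterated marker/Shannon--McMillan--Breiman argument, and keep the images disjoint by syntactic signatures. So as a roadmap you have reconstructed the right proof, and the peripheral steps (the Borel stratification, the existence of high-entropy mixing SFTs inside $Y$, the disjointification of images at negligible entropy cost) are all sound and routine.

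The genuine gap is exactly where you flag it: the ``Borel analogue of Krieger's generator theorem'' is not a lemma you can invoke --- it \emph{is} the theorem, and your sketch of it does not yet close. The difficulty is that the construction must work simultaneously for all ergodic invariant measures on $X_k$, with no single measure available to control errors: the Rokhlin towers, the SMB good sets, and the stage-$n$ coding errors are all only measure-one sets \emph{for each} $\mu$, with rates that vary with $\mu$, so ``full union of good sets'' does not by itself give a single Borel set on which the limit map $\lim_n\Phi_n$ exists, is equivariant, lands in $\Sigma$, and is injective. In particular, injectivity is the sticking point: two points of $X'$ lying in different ergodic fibers may be separated at stage $n$ and re-merged at stage $n+1$ when the coding is refined, and ``using freeness to separate any pair of points that a given coding fails to separate'' has to be implemented by a Borel-uniform separation scheme (e.g., stabilizing the code on longer and longer central blocks while reserving coding space to record a point's identity), which is the actual content of Hochman's argument. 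Until that step is carried out, what you have is a correct plan rather than a proof. It is worth noting the contrast with the present paper's own contribution (Theorems \ref{MSEmbed1} and \ref{MSEmbed2}): by assuming the source is itself a Markov shift, the authors bypass the generator theorem entirely, using loop-system decompositions and the classical Krieger \emph{embedding} theorem ``top-down''; that route yields genuine Borel (not just almost-Borel) embeddings but cannot handle an arbitrary Borel system $(X,S)$ as in the statement you are proving.
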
  

This theorem easily leads to a decisive almost-Borel 
classification of Markov shifts, and has implications for 
other systems  \cite{Hochman, BB2014}.

The study of 
Borel dynamics, 
adopting weakly  wandering sets 
as the relevant notion of negligible sets, 
was initated  by   Shelah and Weiss \cite{ShelahWeiss,Weiss1, Weiss2}.   
Here that notion of isomorphism preserves additionally the 
infinite and quasi-invariant 
measures (and again it is natural to restrict to free parts).  
Whether there is a theorem for Borel dynamics 
like Theorem \ref{Hochmanembed} 
is a difficult open problem, discussed in \cite{Hochman}.  
Our purpose in this paper is to show that a generalization 
 of Theorem \ref{Hochiso} to this  richer category holds 
in at least one meaningful case.

\begin{theorem}\label{spriso} 
The free parts of mixing SPR Markov shifts are Borel isomorphic  if and only if they have equal entropy.  
\end{theorem}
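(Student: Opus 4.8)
We outline a proof.

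\smallskip

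\emph{Necessity.} This uses nothing about SPR. A Borel isomorphism of the free parts that respects all invariant and quasi-invariant measures induces, by push-forward, an entropy-preserving bijection between the sets of ergodic invariant probability measures of the two shifts; since for a mixing Markov shift the entropy is the supremum of $h(\mu)$ over such $\mu$ (Gurevich's variational principle), the two entropies agree.

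\smallskip

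\emph{Sufficiency.} Let $(X,S)$ and $(Y,T)$ be mixing SPR Markov shifts with $h(X)=h(Y)=h$. The plan is to produce, in the Shelah--Weiss Borel category, an embedding of the free part of $X$ into the free part of $Y$ and symmetrically, and then to invoke the Borel Schr\"oder--Bernstein theorem exactly as in \cite{Hochman}: the back-and-forth surgery it performs merely relabels orbits, so it yields a genuine Borel conjugacy of the free parts, and a Borel conjugacy automatically transports all invariant and quasi-invariant measures. Everything thus reduces to the two embeddings.

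Here SPR is essential, and enters twice. First, an SPR Markov shift is positively recurrent and so carries a (unique) measure of maximal entropy; hence by Theorem \ref{Hochiso} the free parts of $X$ and $Y$ are \emph{already} almost-Borel isomorphic, via a Borel conjugacy $\phi\colon X'\to Y'$ between full subsets $X'\subseteq X$, $Y'\subseteq Y$. The content of Theorem \ref{spriso} is therefore exactly to upgrade this to an isomorphism in the richer category --- that is, to account for (a) the full subsets $X\setminus X'$ and $Y\setminus Y'$ that $\phi$ discards, which carry no invariant probability measure but may carry $\sigma$-finite invariant and quasi-invariant measures, and (b) the infinite invariant and the quasi-invariant measures of the shifts themselves, invisible to the almost-Borel framework. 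Second, SPR supplies the uniformity to do this: the entropy at infinity of an SPR shift lies strictly below $h$. From that gap I would extract a Borel decomposition of the free part of $X$ (and likewise $Y$) into a ``core'', carrying all the maximal-entropy behavior, on which $X$ and $Y$ are matched by $\phi$ after arranging that only weakly wandering sets are discarded; and a ``residual'' region, on which every ergodic invariant probability measure has entropy bounded away from $h$, to which a form of Hochman's embedding theorem (Theorem \ref{Hochmanembed}), strengthened so as to keep track of $\sigma$-finite invariant measures, can be applied.

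The main obstacle, and where the bulk of the work lies, is the passage from almost-Borel to the Shelah--Weiss category. Once the invariant probability measures are handled as above (by $\phi$ on the core, by the embedding theorem on the residual part), one must still match the infinite invariant measures, the quasi-invariant measures, and the discarded full subsets. I expect to treat this by a separate absorption argument internal to the Shelah--Weiss category: build Borel embeddings in both directions that simultaneously track a prescribed list of $\sigma$-finite invariant measures together with their Radon--Nikod\'ym cocycles, and close up once more with Schr\"oder--Bernstein. The delicate point is the interaction of the two bookkeepings --- the marker and Rokhlin-tower constructions used to import entropy onto the core must be carried out without destroying the infinite invariant measures --- and it is precisely the SPR spectral gap, the room between $h$ and the entropy at infinity, that makes it possible to meet both constraints at once.
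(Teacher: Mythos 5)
Your overall architecture (a Borel embedding in each direction, closed up by the Borel Schr\"oder--Bernstein theorem) agrees with the paper's, and the necessity half is fine. But the sufficiency half has a genuine gap: the two steps you defer are exactly the hard part, and one of them is, as you state it, an open problem. Starting from Hochman's almost-Borel isomorphism $\phi$ costs you all control of what happens outside a full set: the complement of a full set in a Markov shift has no useful structural description and can carry a wealth of infinite invariant and quasi-invariant measures. Your proposed repairs are (a) ``a form of Theorem \ref{Hochmanembed} strengthened so as to keep track of $\sigma$-finite invariant measures,'' and (b) an ``absorption argument'' tracking Radon--Nikod\'ym cocycles. Item (a), for general Borel systems, is precisely the difficult open problem the Introduction attributes to \cite{Hochman}; you cannot assume it. Item (b) is a name for the missing construction rather than a construction.

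The paper's proof sidesteps both difficulties by not using Hochman's theorems at all. The almost isomorphism theorem of \cite{BBG2006} for equal-entropy mixing SPR shifts produces a \emph{continuous} embedding $\psi_0$ defined on an explicit subsystem $\Sigma^w$ (the points seeing a magic word $w$ infinitely often in both directions), whose complement is, up to a weakly wandering set, again a \emph{Markov shift} $X_1$ of entropy at most $h-\epsilon$. Because the leftover is Markov and not merely Borel, the lower-entropy embedding it requires is Theorem \ref{MSEmbed1}, proved by elementary reductions (loop systems, Krieger's embedding theorem); no strengthening of Hochman's generator-theorem machinery is needed, and no measures are discarded beyond weakly wandering sets, so quasi-invariant and infinite invariant measures are carried along automatically by injectivity. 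The remaining obstruction --- that $Y\setminus Y_0$ may have too little entropy to receive $X_1$ --- is resolved by a Hilbert-hotel trick: one finds infinitely many pairwise disjoint mixing SFTs $Y_1,Y_2,\dots$ of entropy greater than $h(X_1)$ inside the high-entropy part, sends the free part of $X_1$ into $Y_1$, and shifts the points of $\psi_0(X_0)$ that land in any $Y_i$ over to $Y_{i+1}$ to restore injectivity. If you want to salvage your outline, replace the core/residual decomposition coming from Hochman's theorem by the $\Sigma^w$ decomposition, and replace the hypothetical strengthened embedding theorem by Theorem \ref{MSEmbed1}; the cocycle bookkeeping then becomes unnecessary.
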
 

We note that Hochman \cite{Hochman} has asked 
if  those free parts are in fact topologically conjugate, at least in the case of subshifts of finite type.

As in the almost-Borel case, Theorem \ref{spriso} 
is an immediate corollary of 
an embedding result, stated next.  

\begin{theorem}\label{MSEmbed2} 
Suppose $(Y,T)$ is a mixing SPR Markov shift 
and $(X,S)$ is a Markov shift such that 
$h(X)=h(Y)$ and 
$X$ has  a unique irreducible component of 
full entropy and this component is a mixing SPR Markov shift. 
Then there is a Borel embedding of $(X,S)$ into 
$(Y,T)$. 
\end{theorem}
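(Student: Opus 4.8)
The plan is to split the free part of $X$ into its unique full-entropy component $X_0$ — a mixing SPR Markov shift of entropy $h:=h(X)=h(Y)$ — and a ``sub-critical'' remainder $Z$, on which every ergodic invariant probability measure has entropy $<h$; to Borel-embed $Z$ into $Y$ using a Borel strengthening, available because $Y$ is SPR, of Theorem~\ref{Hochmanembed}; and then to Borel-embed $X_0$ into $Y$ with image disjoint from that of $Z$ by an equal-entropy construction powered by the spectral gap of strong positive recurrence. Applying this twice together with the Borel Schr\"oder--Bernstein remark will then yield Theorem~\ref{spriso}. For the decomposition, let $V_0$ be the vertex set of $X_0$; for $x\in X$ the set $\{n:x_n\in V_0\}$ is an interval of $\mathbb Z$ (if two coordinates lie in the communicating class $V_0$, so does every coordinate between them), so $X$ is the disjoint union of the invariant Borel sets $X_0$ (interval $=\mathbb Z$), the ``entering/leaving/transient'' points, and the points missing $V_0$, which lie in lower irreducible components $C_1,C_2,\dots$ (each with $h(C_k)<h$, by uniqueness of the top component) or in no component. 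Setting $Z=X\setminus X_0$: every ergodic invariant probability measure on $Z$ is carried by some $C_k$, so $Z$ is sub-critical, while along an entering/leaving/transient orbit the entry coordinate drifts monotonically to $\pm\infty$, so Poincar\'e recurrence forbids any invariant probability measure there and that part is compressible.

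For $Z$, Theorem~\ref{Hochmanembed} gives an almost-Borel embedding into $Y$; I would upgrade it to an honest Borel embedding $\iota:Z\hookrightarrow Y$ by exploiting that $Y$ is SPR, so that its entropy at infinity is $<h$: then $Y$ exhausts its entropy by mixing sub-SFTs, its escape-to-infinity locus is itself compressible, and a Krieger-style marker coding of $Z$ into those sub-SFTs — carried out along a countable entropy-exhaustion of $Z$, with the residual everywhere-defined part coded by hand inside a mixing SFT and the compressible pieces of $Z$ sent into a wandering subset of $Y$ — produces a genuine Borel embedding. Its image $\iota(Z)$ is again an invariant sub-critical Borel set.

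The crux is to embed the mixing SPR shift $X_0$, of entropy exactly $h$, into $Y$ with image avoiding $\iota(Z)$. Theorem~\ref{Hochmanembed} is unavailable here (no entropy gap) and topological conjugacy is not to be expected (this is Hochman's question), so strong positive recurrence must be used quantitatively. I would fix exhausting mixing sub-SFTs $\Sigma'_m\subseteq X_0$ and $\Sigma_m\subseteq Y$ with $h(\Sigma'_m),h(\Sigma_m)\uparrow h$, $h(\Sigma'_m)<h(\Sigma_{m+1})$, and — using the sub-criticality of $\iota(Z)$ — keep in $Y$, beyond $\Sigma_m$ and off $\iota(Z)$, a reserve of entropy $h-h(\Sigma_m)>0$. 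Strong positive recurrence of $X_0$ gives uniform exponential recurrence to $\Sigma'_m$ and a unique measure of maximal entropy approximated by those of the $\Sigma'_m$, so each point of $X_0$ recodes as a bi-infinite string of $\Sigma'_m$-blocks interspersed with ``excursions'' whose per-symbol complexity tends to $0$ as $m\to\infty$; one embeds $\Sigma'_m\hookrightarrow\Sigma_{m+1}\hookrightarrow Y$ by Krieger's theorem, encodes the thin excursions with the leftover entropy $h-h(\Sigma_m)$, obtains a Borel embedding on the invariant set where excursion lengths are controlled at level $m$, and assembles these over $m$ into one Borel map, checking injectivity on the residual set by a compressibility argument at infinity.

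I expect this last step to be the main obstacle. Trading the absent entropy gap for the SPR spectral gap, one must run a marker/recoding construction that is faithful not merely almost everywhere but down to every invariant — possibly infinite, or only quasi-invariant — measure and every nonperiodic orbit, while simultaneously steering the image clear of $\iota(Z)$; essentially the whole force of the SPR hypothesis and nearly all the technical work sit here, the decomposition of $X$ and the handling of $Z$ being comparatively routine given Theorem~\ref{Hochmanembed}. Finally, $\iota$ together with the embedding of $X_0$ is the desired Borel embedding of $X$ into $Y$, and applying this twice with Borel Schr\"oder--Bernstein gives the Borel isomorphism of Theorem~\ref{spriso}.
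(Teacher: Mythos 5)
Your decomposition of $X$ and your treatment of the sub-critical remainder $Z$ are essentially fine; indeed $Z$ is, modulo a weakly wandering set, a countable union of lower-entropy irreducible Markov shifts, so Theorem~\ref{MSEmbed1} already gives a Borel embedding of its free part into $Y$ with no need to upgrade Theorem~\ref{Hochmanembed}. The genuine gap is the step you yourself flag as the crux: Borel-embedding the full-entropy mixing SPR component $X_0$ into $Y$. What you sketch there --- exhausting sub-SFTs, marker recoding of excursions with the leftover entropy $h-h(\Sigma_m)$, assembly over $m$ --- is exactly the ``bottom-up'' Krieger-generator strategy which the introduction points out is only known to work almost-Borel (whether it can be made Borel is described there as a difficult open problem), and your escape hatch for the residual set fails: when $X_0$ has infinitely many states, the set of points whose excursion lengths are not controlled at any finite level is \emph{not} compressible --- it is generic for the measure of maximal entropy and for many other measures, since returns to a fixed cylinder are a.s.\ finite but of unbounded length along a typical orbit. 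So the construction as described does not cover a full-measure set, let alone every nonperiodic orbit, and the requirement that the image also avoid $\iota(Z)$ only compounds the difficulty.

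The paper avoids all of this by cutting the top component differently. By the almost isomorphism theorem of \cite{BBG2006}, equal-entropy mixing SPR shifts admit a \emph{continuous injective} conjugacy between the subsystems $\Sigma^w$ of points seeing a magic word $w$ infinitely often in both past and future, with both complements of Borel entropy less than $h(Y)-\epsilon$. The equal-entropy embedding is thus obtained for free on $\Sigma^w$ (a proper subsystem of the top component, not the whole of it as in your split), and what remains of $X$ is a genuinely lower-entropy Markov shift $X_1$ plus a weakly wandering set; $X_1$ is then sent by Theorem~\ref{MSEmbed1} into countably many disjoint mixing SFTs inside the image, with a Hilbert-hotel shift $\phi_i:Z_i\to Z_{i+1}$ restoring injectivity after the fact rather than steering the equal-entropy map clear of the other image a priori. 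Unless you can actually carry out your equal-entropy marker construction --- which would amount to reproving, in Borel strength, the hardest ingredient --- the argument is incomplete; the missing idea is this top-down use of \cite{BBG2006}.
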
 

The proof is independent of Hochman's result and techniques. 
Roughly speaking, Hochman builds almost-Borel embeddings from the bottom 
up with a uniform version of the Krieger Generator Theorem
\cite{KriegerGenerator}.  In our much more special situation, 
we can build Borel embeddings with the following offshoot 
of the Krieger Embedding Theorem. 

\begin{theorem}\label{MSEmbed1} 
Suppose $(Y,T)$ is a mixing Markov shift 
and $(X,S)$ is a Markov shift such that 
$h(X)<h(Y)$.  
Then there is a Borel embedding of the free part of $(X,S)$ into 
$(Y,T)$. 
\end{theorem}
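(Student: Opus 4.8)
The plan is to reduce to the case that $(Y,T)$ is a mixing shift of finite type, carve out of $Y$ countably many disjoint ``rich'' subsystems, split $X^{\mathrm{free}}$ into countably many Borel pieces, and code each piece into its own subsystem of $Y$ by a Krieger‑style magic‑word construction. First I would replace $Y$ by a mixing SFT: picking $h(X)<h_0<h(Y)$, the Gurevich entropy of $Y$ being the supremum of the entropies of its finite subgraphs and $Y$ being mixing, $Y$ contains a mixing SFT $Y'$ with $h(Y')>h_0$, and it suffices to embed $X^{\mathrm{free}}$ into $Y'$; so from now on $Y$ is a mixing SFT with $h(X)<h(Y)<\infty$, and I fix $h(X)<h'<h''<h(Y)$. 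A second, purely internal remark about $Y$: a mixing SFT of entropy exceeding $h''$ contains a sequence $Z_0,Z_1,Z_2,\dots$ of pairwise disjoint mixing subshifts of finite type with $h(Z_i)>h''$ for all $i$ — one forces a private synchronizing word $M_i$, of rapidly growing length, to recur with a large period inside $Z_i$ while forbidding the $M_j$; the rapid growth keeps the total entropy cost below $h(Y)-h''$, and a point of $Z_j$ contains $M_j$ but none of the $M_i$, $i\neq j$, whence disjointness. It thus suffices to split $X^{\mathrm{free}}$ into countably many Borel‑invariant pieces and embed the $i$‑th one into $Z_i$.

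The decomposition of $X^{\mathrm{free}}$ I would use is as follows. Let $D$ be the dissipative part, the union of the $S$‑orbits of the Borel wandering sets of $X^{\mathrm{free}}$; on $D$ the orbit equivalence relation is smooth, so $(D,S)$ is Borel isomorphic to the shift on $\ZZ\times C$ for a standard Borel space $C$, and such a system embeds Borel‑ly into any positive‑entropy mixing SFT (inject $C$ into a continuum‑sized Borel wandering set and translate). Let $R=X^{\mathrm{free}}\setminus D$ be the conservative part. Enumerate the states of $X$; for $x\in R$ conservativity forces $x$ to visit some state infinitely often in both time directions, so let $v(x)$ be the least such state; $v(\cdot)$ is an orbit invariant and $R=\bigsqcup_v R^v$ with $R^v=\{x\in R:v(x)=v\}$. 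A conservative orbit cannot transit between distinct irreducible components, so every orbit in $R^v$ stays inside the irreducible component $C_v$ of $v$ and visits $v$ infinitely often both ways. Since $h(C_v)\le h(X)<\infty$, every power $(A_{C_v}^{\,n})_{vv}$ is finite and $\limsup_n\frac1n\log (A_{C_v}^{\,n})_{vv}=h(C_v)\le h(X)$, so I can fix a length $N_0$ beyond which, for each relevant $i$, one has $(A_{C_v}^{\,n})_{vv}\le e^{h'n}$ while $Z_i$ has at least $e^{h''n}$ usable synchronizing words of length $n$.

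The $i$‑th conservative piece $R^v$ (with $v$ the $i$‑th value of $v(\cdot)$) is embedded into $Z_i$ by the promised offshoot of the Krieger embedding theorem. Applying the Borel marker lemma to the first‑return system on $\{x\in R^v:x_0=v\}$, I choose in a Borel, shift‑equivariant way a set of visit times to $v$, any two consecutive of which are at least $N_0$ apart; this presents each $x\in R^v$ as a bi‑infinite concatenation of \emph{loop blocks} — closed paths at $v$ of length $\ell\ge N_0$ — together with a phase recording the position of $x_0$ in its block. For every $\ell\ge N_0$ there are at most $e^{h'\ell}$ loop blocks and at least $e^{h''\ell}>e^{h'\ell}$ synchronizing words of length $\ell$ in $Z_i$, so I fix a length‑preserving injection $b\mapsto W(b)$ from loop blocks to synchronizing words and send $x$ to the concatenation $\cdots W(b_{-1})W(b_0)W(b_1)\cdots$ carrying the same phase. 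Choosing $N_0$ large and giving all the $W(b)$ a common marker prefix and a common anchor suffix makes every such concatenation a point of $Z_i$ and makes the blocks recoverable from it; hence the map is a shift‑commuting Borel injection, and its image — the set of points of $Z_i$ whose marker prefixes parse it into $W(b)$‑blocks forming a block sequence whose reconstruction lies in $R^v$ with exactly those markers — is Borel. Assembling the embedding of $D$ into $Z_0$ and of each $R^v$ into its $Z_i$, and using that the $Z_i$ are pairwise disjoint, yields a Borel embedding of $X^{\mathrm{free}}$ into $Y$.

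The step I expect to require the most care is the countable alphabet of $X$: a naive sliding‑block code cannot inject the (generally infinitely many) $X$‑blocks of a bounded length into the finitely many $Y$‑words of that length. The two devices that defeat this are, first, thinning the markers so that all loop blocks have length at least a large $N_0$, past the range in which the block count could exceed the synchronizing‑word count, and second, using the finiteness of the Gurevich entropy of $X$ to bound the number of loop blocks of length $\ell$ by $e^{h'\ell}$, which the entropy gap $h'<h''<h(Z_i)$ lets $Z_i$ absorb. The remaining ingredients — the conservative/dissipative decomposition, the existence of the disjoint rich subsystems $Z_i\subseteq Y$, and the verification that each constructed image is Borel rather than merely analytic — are routine once these are in place, the last point being handled by the explicit marker‑prefix parsing.
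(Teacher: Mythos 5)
Your proposal is correct in substance and shares the paper's overall architecture --- reduce $Y$ to a supply of pairwise disjoint mixing SFTs of entropy above $h(X)$ (our Lemma \ref{lem:disjSFT}), and decompose the free part of $X$ by the least vertex visited infinitely often in both time directions, the non-doubly-recurrent remainder being weakly wandering (this is exactly the computation in Lemma \ref{lem:RedLoop}, and your appeal to an abstract ``dissipative part'' is best replaced by that explicit Borel set) --- but it executes the per-piece embedding by a genuinely different mechanism. The paper converts each piece into a loop system, embeds that loop system into a \emph{finite} loop system of controlled entropy by truncating the generating function and invoking Rouch\'e's theorem (Lemma \ref{lem:RedSFT}), and then applies the Krieger Embedding Theorem, which forces a detour through Proposition \ref{Boyle1983} to repair the periodic-point condition $|P^o_n(X)|\le|P^o_n(Y)|$. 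You instead code the first-return loop blocks directly into marker-framed words of the target SFT, in effect re-proving the relevant special case of Krieger's theorem by hand; restricting to the free part from the outset lets you skip the periodic-orbit surgery entirely, and you never need Rouch\'e or an intermediate SFT. The price is that the marker bookkeeping you correctly flag as delicate (a non-self-overlapping marker that cannot arise by straddling a junction, and the count of marker-framed, marker-free-interior words of each length exceeding $e^{h'\ell}$, which follows from removing a long word as in \cite[Lemma 26.17]{DGS}) is asserted rather than carried out; these are standard but are precisely the details the paper outsources to \cite{KriegerEmbedding}. Your bound on loop blocks via $(A^{\ell})_{vv}\le e^{h'\ell}$ for large $\ell$, using that the Gurevich entropy is $\limsup_n \frac1n\log (A^n)_{vv}$, is the right replacement for the paper's generating-function estimate $f_n<e^{n\beta}$.
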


Theorem \ref{MSEmbed1}, though not completely trivial, 
is completely unsurprising. (The question of when 
a Markov shift embeds {\it continuously} into a mixing 
Markov shift is much harder 
\cite{Fiebigs1997,Fiebigs2005}.)  
The  novel 
feature in the proof of Theorem \ref{MSEmbed2} 
is the 
use  of a ``top-down'' embedding given by the almost 
isomorphism theorem of \cite{BBG2006} to reduce the problem 
to embeddings of lower entropy systems. 

At the end of the paper we state the Borel classification of the free parts of irreducible SPR Markov shifts,  
which follows from the mixing case. 

\subsection*{Acknowledgments}
We thank Mike Hochman for the stimulating discussions 
out of which this paper emerged. 
M. Boyle gratefully acknowledges the
financial support of ANR project DynNonHyp BLAN08-2\_313375 and the hospitality of the Mathematics Department of Universit\'e Paris-Sud in Orsay.

\section{Definitions and background}\label{sec:def}


A {\bf Borel system} $(X,\mathcal X,T)$ is a standard Borel
space\footnote{$\mathcal X$ is a $\sigma$-algebra of subsets of $X$
  such that there is distance on $X$ which turns it into a complete
  separable space whose collection of Borel subsets is $\mathcal X$.}
$(X,\mathcal X)$ together with a Borel automorphism\footnote{A
  bijection such that 
$T^{-1}\mathcal X:=\{T^{-1}E:E\in\mathcal  X\}=T\mathcal X=\mathcal X$.} $T:X\to X$. We often abbreviate
$(X,\mathcal X,T)$ to $(X,T)$ or $X$ or $T$ if it does not create
confusion. 
A {\bf Borel factor map} is a homomorphism of Borel systems:
a (not necessarily onto) Borel
measurable 
map intertwining the actions. An 
 isomorphism or conjugacy of Borel
systems is a bijective Borel factor map; an embedding of Borel systems 
is an injective Borel factor map. 
%
By an easy exercise in descriptive set theory
(see \cite[p.399]{Weiss1}),
there is a Borel conjugacy of two systems if and only if there 
is a Borel conjugacy between their free parts and for each $n$ the
cardinalities 
of their sets of periodic orbits of size $n$ is the same.


Given a Borel system $(X,T)$, we use 
$\Prob(X)\supset \pe(X) \supset \pen(X)$ 
respectively to denote 
the sets of all  measures\footnote{Unless specified otherwise, the
  word measure will denote an invariant Borel probability.},
all ergodic measures,  and  all ergodic nonatomic 
measures. 
%
Recall from \cite{Weiss1} that a set $W$ is {\bf wandering} if it is
Borel and if $\bigcup_{k\in\ZZ} T^kW$ is a disjoint union (which we
denote $\bigsqcup_{k\in\ZZ} T^kW$).  A set is {\bf weakly wandering}
if it is a Borel subset of a countable union of wandering sets. Such
a set has measure zero for all quasi-invariant 
measures \cite{ShelahWeiss,Weiss1}, not only for measures in $ \Prob(X)$. 
To avoid any mystery, we record 
a simple remark. 
\begin{remark} \label{lem:rug} 
Suppose $(X,S)$ and $(Y,T)$ are Borel systems and 
each contains an uncountable Borel set which is wandering. Then 
the systems are Borel isomorphic if and only if they are Borel 
isomorphic modulo wandering sets. 
\end{remark} 
The basis of the remark is the following. 
Any weakly wandering set is contained in the orbit of a wandering 
set. Under the assumption, such wandering sets in $X$ and $Y$ 
  can be enlarged to 
uncountable Borel subsets of the ambient Polish space. Any two 
such sets are Borel isomorphic.


%
%

A {\bf Markov shift} $(X,S)$ is a topological system $\Sigma (G)$  defined by the action of the
left shift $\sigma:(x_n)_{n\in\ZZ} \mapsto (x_{n+1})_{n\in\ZZ}$ on the
set $\Sigma(G)$ of paths on some oriented
graph $G$ with countably (possibly finitely)
many vertices and edges. We will use the edge shift  
(rather than the vertex shift) presentation. 
The domain $X$ is 
the set of  $x=(x_n)_{n\in \ZZ}  \in \mathcal E^\ZZ$ (where $\mathcal E$ is the set of
oriented edges) such that for all $n$, the terminal vertex of $x_n$
equals the initial vertex of $x_{n+1}$. 
The (Polish) topology on $X$ is the relative topology of the product 
of the discrete topologies. 
When
$G$ is finite, $\Sigma (G)$ is a shift of finite type (SFT).  
$\Sigma (G)$ is
{\bf irreducible} if $G$ contains a unique strongly connected
component, i.e., a maximal set of the vertices such that for any pair,
there is a loop containing both. 
An arbitrary Markov shift is the disjoint 
union of a wandering set and countably many disjoint irreducible
Markov shifts. 
An irreducible Markov shift is mixing if and only if the g.c.d. of the
periods of 
its periodic points is 1. 

The Borel entropy of a system $(X,S)$ is the supremum of the
Kolmogorov-Sinai entropies $h(S, \mu )$, $\mu \in \Prob(X)$. 
Markov shifts of positive entropy contain  uncountable wandering 
sets; so, by the Remark \ref{lem:rug}, for simplicity we
can neglect weakly wandering 
sets in both statements and proofs. 
An irreducible Markov shift $(X,S)$ (more generally, an irreducible component) has at most one measure of maximum
(necessarily finite)  entropy
\cite{Gurevic1970};  
if this measure $\mu$ exists,
then 
$(S, \mu )$ is measure-preservingly isomorphic to the product of a  finite entropy Bernoulli
shift and a finite cyclic rotation (see \cite{BB2014} 
for comment and references).
 
An irreducible Markov shift 
$\Sigma$ is 
 {\bf strongly positively recurrent} (or 
{\bf stably positive recurrent} or just {\bf SPR}) 
if it admits a measure $\mu$ of maximal  entropy 
which is {\it exponentially recurrent}: 
for every non-empty open subset $U\subset\Sigma$, 
 $$
   \limsup_{n\to\infty} \frac1n\log\mu 
\Big(  \Sigma\setminus\bigcup_{k=0}^{n-1}\sigma^{-k}U \Big) < 0\ .
 $$
We refer to \cite{BBG2006, Gurevich1996, GurevichSavchenko} for more on SPR shifts. 
In the language of \cite{Gurevich1996, GurevichSavchenko}, 
the SPR Markov shifts are the positively recurrent symbolic 
Markov chains defined by stably recurrent matrices (further developed 
in \cite{GurevichSavchenko} as the fundamental class of \lq\lq stably
positive\rq\rq\   matrices). 
The SPR Markov shifts are a natural subclass preserving some of the 
significant properties of finite state shifts \cite[Sec.2]{BBG2006}.

\section{Embedding a Markov shift with smaller entropy}
In this section we 
will prove Theorem \ref{MSEmbed1}.
First we recall and adapt some standard finite-state symbolic dynamics 
(for more detail on this, see \cite{Boyle1983} or
\cite{LindMarcus1995}). 

\begin{lemma}\label{lem:disjSFT}
Suppose $\epsilon
> 0$ and 
$X$ is  a mixing Markov shift with entropy $h(X)>0$. 
 Then $X$  
contains infinitely many mixing SFTs $S_n$, 
pairwise disjoint, such that 
 $h(S_n)>h(X)-\epsilon$ for all $n$. 
\end{lemma}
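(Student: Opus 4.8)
The plan is to reduce to the case where $X$ is itself a mixing SFT and then to build the disjoint subsystems one at a time, using the Krieger embedding theorem to place them. I may assume $\epsilon<h(X)$.

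\emph{Reduction to an SFT.} By Gurevich's theorem the entropy of $X$ is the supremum of $h(\Sigma(W))$ over the finite subgraphs $W$ of the graph presenting $X$; I fix one with $h(\Sigma(W))>h(X)-\epsilon/2$ and, after passing to an irreducible component of largest entropy, assume $\Sigma(W)$ is irreducible. It may fail to be mixing, but since $X$ is mixing the g.c.d.\ of the lengths of the first-return loops at a chosen vertex $v$ of $W$ (taken in the full graph of $X$) equals $1$, so adjoining finitely many such loops through $v$ produces a finite irreducible subgraph of period $1$ with entropy still $>h(X)-\epsilon/2$. Renaming, it suffices to show: a mixing SFT $Z$ with $h(Z)>0$ contains infinitely many pairwise disjoint mixing SFTs of entropy $>h(Z)-\delta$, for every $\delta>0$.

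\emph{From infinitely many to two.} It is enough to produce, in an arbitrary mixing SFT $Z$ with $h(Z)>0$, two disjoint mixing SFTs of entropy $>h(Z)-\delta$ for every $\delta>0$. Applying this in $Z$, then inside one of the two pieces it yields, and so on with tolerances $\delta/2,\delta/4,\dots$, gives a nested sequence of mixing SFTs $Z\supseteq B_1\supseteq B_2\supseteq\cdots$ together with mixing SFTs $A_j\subseteq B_{j-1}$ each disjoint from $B_j$; then the $A_j$ are pairwise disjoint and $h(A_j)>h(Z)-\delta$ for all $j$.

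\emph{Two disjoint copies via Krieger.} By the Krieger embedding theorem, to embed $T\sqcup T$ into the mixing SFT $Z$ (the two images of $T$ being the desired pair) it suffices to exhibit a mixing SFT $T$ with $h(Z)-\delta<h(T)<h(Z)$ and with $p_n(Z)\ge 2\,p_n(T)$ for all $n\ge1$, where $p_n(\cdot)$ counts points of period $n$. I take $T$ to be the SFT of all bi-infinite free concatenations of words from a set of $M$ words of some length $L$ together with one further word of length $L+1$; the coprime lengths make $T$ mixing, one has $p_n(T)=0$ for $n<L$, and in general $p_n(T)$ is at most a polynomial-in-$n$ multiple of $e^{nh(T)}$ with $h(T)=\tfrac{\log M}{L}+o(1)$, whereas $p_n(Z)\ge \tfrac12 e^{nh(Z)}$ for all $n$ past a threshold depending only on $Z$. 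Choosing first $L$ large (so that $p_n(T)$ vanishes below $L$ and the exponential growth of $p_n(Z)$ already dominates from $L$ onwards), and then $M$ so that $h(Z)-\delta<h(T)<h(Z)$, one gets $p_n(Z)\ge 2\,p_n(T)$ for every $n$; the tightest case is $n$ a multiple of $L$, where $p_n(T)=L\,M^{n/L}$, and there the gap factor $e^{n(h(Z)-h(T))}\ge e^{L(h(Z)-h(T))}$ outweighs $2L$ precisely because $L$ has been taken large.

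The step I expect to be the main obstacle is the last one: since $h(T)$ must be close to $h(Z)$, the periodic-point counts of $T$ grow almost as fast as those of $Z$, so keeping $2\,p_n(T)\le p_n(Z)$ simultaneously for every $n$ forces $T$ to have no short periodic orbits (hence to be built from sufficiently long blocks) and demands some care at the periods divisible by the block length. The reduction to a mixing SFT and the passage from "two" to "infinitely many" are routine bookkeeping.
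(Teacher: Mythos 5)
Your argument is correct, but the key step is carried out by a genuinely different device than in the paper. The paper's proof is three lines: take a high-entropy SFT inside $X$ (Gurevich), enlarge it to a mixing SFT $S'$, and then quote \cite[Lemma 26.17]{DGS} --- the complement of any proper subshift of a mixing SFT contains mixing SFTs of entropy arbitrarily close to $h(S')$ --- to peel off the family inductively. You replace that citation by a self-contained construction: reduce ``infinitely many'' to ``two disjoint'' by nesting, and obtain two disjoint high-entropy mixing SFTs in $Z$ by applying the Krieger embedding theorem to $T\sqcup T$, where $T$ is a loop system built from $M$ words of length $L$ and one of length $L+1$. The point you correctly identify as the crux --- that $T$ must have no periodic orbits of length $<L$, and that the periodic-point comparison is tightest at multiples of $L$, where $2LM^{k}$ must be beaten by roughly $e^{kL\,h(Z)}\geq M^{k}e^{kL(h(Z)-h(T))}$ --- does check out: fixing the target entropy gap $h(Z)-h(T)\geq\delta/2$ first and then taking $L$ large makes $e^{L\delta/2}$ dominate the linear factor $L$, and for $n$ not a multiple of $L$ the forced uses of the length-$(L{+}1)$ loop cost factors of $M$ that only loosen the constraint. (Two small points worth tightening: Krieger's theorem as stated in the paper compares counts of points on orbits of \emph{least} period $n$, not of period dividing $n$, but since $q_n(T)\leq p_n(T)$ and $q_n(Z)\geq \tfrac14 e^{nh(Z)}$ eventually for a mixing SFT, your inequality still delivers the needed one; and you should note that the continuous embedding of $T\sqcup T$ is a conjugacy onto its image, so each image is again a mixing SFT.) What each approach buys: the paper's is shorter by outsourcing the disjointness to DGS; yours uses only the Krieger embedding theorem, which the paper already states and needs elsewhere, and your explicit loop-system $T$ is in the same spirit as the paper's proof of its Lemma 3.8, so the whole lemma becomes self-contained at the cost of the quantitative periodic-point bookkeeping you carried out.
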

\begin{proof} 
$X$ contains an SFT $S$ with entropy greater than $h(X)-\epsilon$ \cite{Gurevic1970}; 
$S$ is easily enlarged to a mixing SFT $S'$ in $X$. 
The complement of a given  proper subshift of $S'$ contains a mixing SFT 
with entropy arbitrarily close to $h(S')$ \cite[Lemma 26.17]{DGS}. 
Thus one 
can construct the required family inductively. 
\end{proof} 

\begin{definition}
For a system $(X,S)$, $|P^o_n(X)|$ denotes the cardinality of the set of
points in $S$-orbits of length $n$. 
\end{definition} 

\begin{theorem}[Krieger Embedding Theorem \cite{KriegerEmbedding}]
\label{KriegerEmbed}
Let $X$ be a subshift on a finite alphabet and  $Y$  a mixing
SFT 
such that $h(X)<h(Y)$ and 
$|P^o_n(X)| \leq |P^o_n(Y)|$  
for all $n$. 
Then there is a continuous embedding of $X$ into $Y$.
\end{theorem}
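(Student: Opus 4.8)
The plan is to construct a single continuous, shift-commuting injection $\phi\colon X\to Y$ by a marker-and-filler coding. Since $X$ is a subshift on a finite alphabet it is compact, so any continuous equivariant injection into the Hausdorff space $Y$ is automatically a homeomorphism onto its image, and no separate argument for continuity of the inverse is required; it suffices to produce a continuous, equivariant, injective map. The two hypotheses play disjoint roles: the entropy gap $h(X)<h(Y)$ supplies enough room to encode the aperiodic part, while the periodic-orbit inequality $|P^o_n(X)|\le|P^o_n(Y)|$ lets me inject the (for each period, finitely many) periodic points of $X$ into those of $Y$.

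First I would invoke the Marker Lemma for subshifts (see \cite{LindMarcus1995,Boyle1983}): for a large integer $n$ to be chosen, there is a clopen marker set $F\subseteq X$ whose translates $F,\sigma^{-1}F,\dots,\sigma^{-(n-1)}F$ are pairwise disjoint, and which is visited by every sufficiently aperiodic orbit with gaps bounded, say in $[n,2n)$. The markers cut such an orbit into a bi-infinite concatenation of $X$-words of lengths $\ell\in[n,2n)$. The number of $X$-words of length $\ell$ grows like $e^{\ell(h(X)+o(1))}$, while in the mixing SFT $Y$ the number of available $Y$-words of length $\ell$ grows like $e^{\ell(h(Y)+o(1))}$ with $h(Y)>h(X)$; so for $n$ large there is room to assign, injectively for each fixed length $\ell$, to each $X$-word $u$ of length $\ell$ a $Y$-word $\phi(u)$ of the same length. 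Using mixing of $Y$ I would fix a synchronizing (``magic'') word $w^{\ast}$ and require each $\phi(u)$ to begin with $w^{\ast}$ and to contain no further occurrence of it; avoiding a single fixed word costs arbitrarily little entropy, so the budget $h(X)<h(Y)$ is preserved. Mixing also lets consecutive coded blocks be concatenated into a legal bi-infinite $Y$-path, and the marker word $w^{\ast}$ lets the block boundaries be recovered from $\phi(x)$, which yields injectivity on the aperiodic part.

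The remaining, and most delicate, step is to merge this coding with an embedding of the periodic points and to verify global continuity. For each $n$ there are only finitely many points of $X$ in orbits of length $n$, and the inequality $|P^o_n(X)|\le|P^o_n(Y)|$ lets me inject these orbits into period-$n$ orbits of $Y$; periodic points of large period are swept up by the same marker coding, where the entropy gap guarantees the encoding never runs out of room, so only finitely many short orbits need separate treatment. I expect the main obstacle to be reconciling the two constructions into one map that is continuous and injective everywhere at once: periodic points are accumulated by aperiodic points, so the coded images of nearby aperiodic orbits must converge to the prescribed periodic images, and the periodic and aperiodic images must be kept disjoint. Arranging the marker set, the reserved periodic targets, and the synchronizing word $w^{\ast}$ so that all of these recognizability and compatibility requirements hold simultaneously, while spending only the available entropy and periodic-point budgets, is the crux of the argument.
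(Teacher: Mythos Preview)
The paper does not prove this theorem: Theorem~\ref{KriegerEmbed} is stated with attribution to Krieger \cite{KriegerEmbedding} and used as a black box (to derive Corollary~\ref{coro:BoyleEmbed}), with no proof given. So there is no in-paper proof to compare your proposal against.

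As for the proposal itself, your outline follows the standard marker--filler strategy of Krieger's original argument (and the treatments in \cite{Boyle1983,LindMarcus1995}), and you correctly identify the division of labor between the entropy gap and the periodic-orbit inequalities. However, you stop precisely at the hard step: you write that merging the block code on aperiodic points with the prescribed injection on short periodic orbits, and arranging continuity at the periodic points, ``is the crux of the argument'' --- and then do not carry it out. That merging is where essentially all the work lies; in Krieger's proof one must carefully design the marker set and the coding so that long periodic orbits are handled by the block code, the remaining finitely many short orbits are mapped to reserved targets in $Y$, and points near a short periodic orbit are coded compatibly with that target. Without at least indicating how those compatibilities are achieved, the proposal is an outline rather than a proof.
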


\begin{proposition}\cite[Lemma 2.1 and p.546]{Boyle1983} \label{Boyle1983}
Suppose  $X$ is a mixing SFT and $M$ is a positive integer. 
 Let $\mathcal O_1, \dots , \mathcal O_r$ be distinct finite orbits in
 $X$. 
 Let $W_i$ be the set of points whose positive iterates 
are positively asymptotic to
$\mathcal O_i$, and let $W=\cup_i W_i$.
Then there exist a mixing SFT $Z$ and a continuous surjection $p:Z\to X$ such that:
 \begin{enumerate}
  \item $|p^{-1}(x)|=1$ for all $x$ outside $W$
  \item The preimage of $\mathcal O_i$ is an orbit $\widetilde{\mathcal O_i}$ of
    length $M|\mathcal O_i|$. 
  \item $p^{-1}(W_i)$ is the 
set of points positively asymptotic to $\widetilde{\mathcal O_i}$. 
\end{enumerate}
\end{proposition}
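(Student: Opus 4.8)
The plan is to produce $Z$ by ``blowing up'' each periodic orbit $\mathcal O_i$ into a loop that is $M$ times as long while disturbing nothing else, and to take $p$ to be the induced one-block code.

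First I would pass to a higher block presentation of $X$: replacing $X$ by its $N$-block recoding for $N$ larger than every $|\mathcal O_i|$ and large enough to separate the finitely many orbits, we may assume $X=\Sigma(G)$ where in $G$ each $\mathcal O_i$ is carried by a \emph{simple} cycle $C_i$, say $v^{(i)}_0\to v^{(i)}_1\to\cdots\to v^{(i)}_{q_i-1}\to v^{(i)}_0$ with edges $e^{(i)}_0,\dots,e^{(i)}_{q_i-1}$, the vertex sets of the $C_i$ being pairwise disjoint. Form a graph $H$ by replacing the $q_i$ vertices of $C_i$ with $Mq_i$ vertices $w^{(i)}_0,\dots,w^{(i)}_{Mq_i-1}$ carrying one directed loop, with edges $f^{(i)}_m:w^{(i)}_m\to w^{(i)}_{(m+1)\bmod Mq_i}$, and declare $p(f^{(i)}_m)=e^{(i)}_{m\bmod q_i}$ (the induced vertex projection being $w^{(i)}_m\mapsto v^{(i)}_{m\bmod q_i}$). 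Every edge $g$ of $G$ other than the $e^{(i)}_j$ is kept, with $p(g)=g$, but its endpoints are moved by the vertex projection with the following asymmetry: if the head of $g$ is a cycle vertex $v^{(i)}_j$ (so $g$ \emph{enters} $C_i$) then $H$ receives $M$ parallel copies of $g$, with heads $w^{(i)}_j,w^{(i)}_{j+q_i},\dots,w^{(i)}_{j+(M-1)q_i}$; if the tail of $g$ is a cycle vertex $v^{(i)}_j$ (so $g$ \emph{leaves} $C_i$) then the tail is taken to be $w^{(i)}_j$ (the ``$0$-th copy'') only. An edge with both endpoints on cycles obeys both rules simultaneously. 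Set $Z=\Sigma(H)$, with $p$ the induced one-block factor map.

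Then the properties are checked as follows. \emph{Surjectivity:} an excursion of an $X$-path onto $C_i$ enters at some $v^{(i)}_{j_0}$, traverses $t$ cycle edges, and exits at $v^{(i)}_k$; the exit edge can leave only from the $0$-th copy $w^{(i)}_k$, while on entering one may pick any copy $w^{(i)}_{j_0+\ell q_i}$, and exactly one residue $\ell\bmod M$ makes the traversal land on $w^{(i)}_k$, so the excursion lifts; concatenating, $p$ is onto. \emph{Properties $(1)$--$(2)$:} call a coordinate $n$ of $x$ \emph{anchored} if $x_n$ is not one of the cycle edges $e^{(i)}_j$; then the $H$-vertex reached at time $n$ is forced (it is a ``$0$-th copy'' whenever $x_n$ touches a cycle at its tail). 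Once anchored the lift is forced in the backward direction, and forced forward as well \emph{except} that an infinite forward excursion staying on a single $C_i$ leaves the entry copy free --- and that is precisely the condition $x\in W_i$. Since the only $x$ with no anchor at all is a bi-infinite traversal of a single cycle, i.e.\ a point of some $\mathcal O_i$, we get $|p^{-1}(x)|=1$ for $x\notin W$, while $p^{-1}(\mathcal O_i)$ is the one orbit of length $Mq_i=M|\mathcal O_i|$ consisting of the bi-infinite traversals of the $i$-th long loop. \emph{Property $(3)$:} the only $H$-edges lying over a $C_i$-edge are the edges of the $i$-th long loop, so $z$ lies over a point of $W_i$ if and only if $z_n$ runs along that loop for all large $n$, i.e.\ if and only if $z$ is positively asymptotic to $\widetilde{\mathcal O_i}$.

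The step I expect to be the main obstacle is checking that $Z$ is a \emph{mixing} SFT; that it is an SFT and (when $h(X)>0$, as in the applications) irreducible is immediate. For aperiodicity one has the loop $\widetilde{\mathcal O_1}$, of length $Mq_1$, and needs another loop through the same vertex of coprime length; such loops come from leaving a cycle through one of the retained edges, meandering through the (mixing) rest of the graph, and re-entering at the $0$-th copy, and mixing of $X$ makes enough lengths available --- it is convenient to arrange in the preliminary recoding that some cycle vertex already carries an edge off its cycle, which is harmless as $X$ is mixing of positive entropy. A secondary, purely bookkeeping, matter is the handling of retained edges joining two cycle vertices (including a self-loop at a cycle vertex when $q_i=1$): such an edge ``leaves'' at its tail and ``enters'' at its head, and one checks directly that applying both rules does not disturb $(1)$--$(3)$.
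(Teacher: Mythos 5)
The paper does not prove this proposition --- it is quoted from \cite[Lemma~2.1 and p.~546]{Boyle1983} --- so there is no in-paper argument to compare against; your construction is essentially the standard one from that reference (an orbit-lengthening extension, resolving in one direction, which is $1$-to-$1$ off the basin of the chosen orbits and $M$-to-$1$ over it), and I believe it is correct. The local analysis is right: exit edges emanating only from the $0$-th copies make the lift backward-deterministic from any ``anchored'' coordinate, entry edges fanning out to all $M$ copies give exactly one admissible entry copy per finite excursion (hence surjectivity and injectivity off $W$), and the only edges of $H$ over cycle edges are the long-loop edges (hence $(2)$ and $(3)$). Two small points are worth pinning down. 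First, the statement genuinely needs $h(X)>0$ (equivalently, $X$ is not itself a single periodic orbit): otherwise $Z$ would be a single orbit of length $M|\mathcal O_1|$ and could not be mixing; as you note, this is harmless where the proposition is used. Second, your aperiodicity argument can be made cleaner and fully rigorous as follows: after the recoding, positive entropy guarantees a vertex $v$ of $G$ lying on no $C_i$, and any closed path of $G$ based at $v$ lifts to a closed path of $H$ at $v$ \emph{of the same length}, because each intermediate excursion onto a cycle has its entry copy uniquely forced by its exit; primitivity of $G$ then supplies closed paths at $v$ of two consecutive large lengths, so $H$ (which is irreducible by the argument you sketch) has period $1$. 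This avoids having to arrange anything special about cycle vertices carrying exit edges.
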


\begin{corollary}\label{coro:BoyleEmbed}
Let $X$ and $Y$ be SFTs such that $h(X)<h(Y)$ and $Y$ is mixing. Then
there is a continuous embedding of $X\setminus X_0$ into $Y$ where
$X_0$ is the union of a weakly wandering set and a finite set of periodic points.
\end{corollary}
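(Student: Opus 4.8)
The plan is to push the ``excess'' periodic orbits of $X$ up to long periods, at which $Y$ has room for them, using Proposition~\ref{Boyle1983}, and then to apply the Krieger Embedding Theorem (Theorem~\ref{KriegerEmbed}).

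\emph{Reductions.} First I would observe that only finitely many periods are \emph{bad}, i.e.\ have $|P^o_n(X)|>|P^o_n(Y)|$: indeed $|P^o_n(X)|\le\operatorname{tr}(A_X^n)=O(e^{nh(X)})$ (with $A_X$ the adjacency matrix of $X$), while, $Y$ being mixing, $|P^o_n(Y)|=e^{nh(Y)}(1+o(1))$ by Perron--Frobenius; since $h(X)<h(Y)$, this also shows $|P^o_n(Y)|-|P^o_n(X)|\to\infty$ exponentially. Next I would write $X$ as the disjoint union of a wandering set --- absorbed into $X_0$ --- and finitely many irreducible SFTs $X^{(1)},\dots,X^{(m)}$, and use Lemma~\ref{lem:disjSFT} to find pairwise disjoint mixing SFTs $Y_1,\dots,Y_m$ in $Y$ with $h(Y_i)>h(X)$, so that it suffices to embed each $X^{(i)}$, minus a set of the allowed form, continuously into $Y_i$; the images being disjoint, these combine. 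A zero-entropy $X^{(i)}$ is a single periodic orbit, which I would place in $X_0$ if its length is bad for $Y_i$ and otherwise embed directly by Theorem~\ref{KriegerEmbed}. For a positive-entropy $X^{(i)}$ of period $p$, I would use that $X^{(i)}$ is conjugate to the $p$-fold cyclic tower $\tilde A$ over the mixing SFT $A$ obtained by restricting $\sigma^p$ to one of the $p$ cyclically permuted clopen pieces: a factor map $q\colon B\to A$ of mixing SFTs induces a factor map $\tilde q$ of the towers, $\tilde B$ is again an SFT with $h(\tilde B)=h(X^{(i)})$, the non-injectivity set of $\tilde q$ and the asymptotic basins over it are the towers of the corresponding sets for $q$, and a tower orbit of period $n$ corresponds to a base orbit of period $n/p$ --- so weak wandering, the periodic-point bookkeeping, and the Krieger hypotheses all pass between $B$ and $\tilde B$. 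This reduces everything to the case that $X$ is a mixing SFT.

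\emph{The mixing case.} Let $\mathcal O_1,\dots,\mathcal O_r$ be the finitely many periodic orbits of $X$ whose length is bad, and apply Proposition~\ref{Boyle1983} to $X$ with these orbits and with an integer $M$, to be chosen large, obtaining a mixing SFT $Z$ and a continuous surjection $\pi\colon Z\to X$, injective off $W:=\bigcup_i W_i$, with $\pi^{-1}(\mathcal O_i)$ an orbit of length $M|\mathcal O_i|$. Being finite-to-one, $\pi$ preserves entropy, so $h(Z)=h(X)<h(Y)$. Taking $M$ larger than every bad period, each length $M|\mathcal O_i|$ is non-bad, whence $|P^o_n(Z)|=0$ for bad $n$; for non-bad $n\ne M|\mathcal O_i|$ one has $|P^o_n(Z)|=|P^o_n(X)|\le|P^o_n(Y)|$; and for $n=M|\mathcal O_i|$ one has $|P^o_n(Z)|\le|P^o_n(X)|+O(M)$, which is $\le|P^o_n(Y)|$ once $M$ is large, since $|P^o_n(Y)|-|P^o_n(X)|$ grows exponentially in $n\ge M$. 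So $Z$ satisfies the hypotheses of Theorem~\ref{KriegerEmbed}, giving a continuous embedding $\iota\colon Z\embedsinto Y$. Now set $X_0:=W$: by Proposition~\ref{Boyle1983}(3) each $W_i$ is the union of the finite orbit $\mathcal O_i$ with the set of points positively asymptotic to but not on $\mathcal O_i$, and this latter set is the orbit of a wandering set (such a point branches off from $\mathcal O_i$ at a well-defined last coordinate, and the set of those with a fixed branch position is wandering), so $X_0$ is of the asserted type. Finally $\pi$ restricts to a continuous bijection $Z\setminus\pi^{-1}(W)\to X\setminus W$ whose inverse is continuous --- in the compact space $Z$, the only possible subsequential limit of the $\pi$-preimages of a sequence converging in $X\setminus W$ is the unique $\pi$-preimage of the limit --- and composing this inverse with $\iota$ produces the desired embedding of $X\setminus X_0$ into $Y$.

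\emph{Main obstacle.} The delicate step is the choice of $M$: one must arrange that $Z$ meets the Krieger periodic-point inequality, and this is possible precisely because the entropy gap $h(X)<h(Y)$ forces the surplus $|P^o_n(Y)|-|P^o_n(X)|$ to be eventually exponentially large, dwarfing the $O(M)$ periodic points created by dilating the bad orbits. A secondary technical point is checking that weak wandering, the periodic-orbit counts, and the Krieger hypotheses really do survive the cyclic-tower reduction used to handle the periods of non-mixing components.
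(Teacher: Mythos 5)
Your proposal is correct and follows essentially the same route as the paper's (much terser) proof: dilate the finitely many ``bad'' periodic orbits via Proposition~\ref{Boyle1983} with $M$ large, use the exponential growth of $|P^o_n(Y)|-|P^o_n(X)|$ to verify the hypotheses of Theorem~\ref{KriegerEmbed} for $Z$, and invert $\pi$ off the asymptotic basins, which form exactly the allowed set $X_0$. Your additional reduction of a general SFT $X$ to the mixing case (irreducible decomposition plus cyclic towers) addresses a detail the paper glosses over, since Proposition~\ref{Boyle1983} is stated only for mixing $X$.
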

\begin{proof} 
We have that $\lim_n (\, |P^o_n(Y)|-|P^o_n(X)|\, ) = \infty$. 
Thus we may choose $M$ to
 build $Z$ as in  Proposition \ref{Boyle1983} such that $Z$, 
by Theorem \ref{KriegerEmbed}, embeds into $Y$. 
The map $Z\to X$ is a Borel isomorphism on the complement of 
a set $X_0$ of points positively  asymptotic to finitely  many periodic points. 
\end{proof} 

To reduce  Theorem \ref{MSEmbed1} to this corollary, we use 
reductions stated as three lemmas. 
A {\bf loop system} is a Markov shift defined by a
{\bf loop graph}: a graph made of simple loops which are 
based at a common
vertex and otherwise do not intersect. 
Given a power series $f = \sum_{n=1}^{\infty} f_nz^n$ 
with coefficients in $\ZZ_+$, 
we let $\Sigma_f$ denote the  loop system 
with exactly $f_n$ simple
loops 
of length $n$ in the loop graph.  
If $h(\Sigma_f) = \log \lambda < \infty$, then 
\begin{enumerate} 
\item 
$0< f(1/\lambda ) \leq 1$,  
\item $\alpha < \lambda \implies f(1/\alpha ) = \infty$ and 
\item $f(1/\lambda )=1 $ 
if $\Sigma_f$ has a measure of maximum 
entropy (i.e. is positive recurrent). 
\end{enumerate}
For more on loop systems and Markov shifts, see 
\cite{BBG2006,GurevichSavchenko,Kitchens1998} and 
their references.

\begin{lemma}\label{lem:RedLoop}
Any Markov shift $X$ is Borel isomorphic to a Borel system 
 $$
W \ \sqcup\ \bigsqcup_{n\in\NN} \Sigma(L_n) 
 $$
where $W$ is weakly wandering and for each $n$, 
$L_n$ is a loop
graph. 
\end{lemma}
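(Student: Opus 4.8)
The plan is to \emph{sweep out} the vertex set of the defining graph $G$ of $X=\Sigma(G)$ one vertex at a time. Fix an enumeration $V(G)=\{v_1,v_2,\dots\}$. For $x\in X$ and $m\in\ZZ$ write $p_m(x)$ for the initial vertex of the edge $x_m$, so that $p_m(\sigma x)=p_{m+1}(x)$. I would set $N_0=X$ and, having reached
\[
N_{n-1}=\{x\in X:p_m(x)\notin\{v_1,\dots,v_{n-1}\}\ \text{for all }m\}=\Sigma\bigl(G\setminus\{v_1,\dots,v_{n-1}\}\bigr),
\]
split it as $N_{n-1}=E_n\sqcup F_n\sqcup N_n$, where $E_n$ consists of the $x\in N_{n-1}$ with $p_m(x)=v_n$ for infinitely many $m>0$ and for infinitely many $m<0$; $F_n$ of those that meet $v_n$ but only finitely often in the forward, or only finitely often in the backward, direction; and $N_n$ of those that never meet $v_n$. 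These are $\sigma$-invariant Borel subsets. The crucial observation is that $p_0(x)\in V(G)$ for every $x$, so $\bigcap_n N_n=\emptyset$; telescoping then gives $X=\bigsqcup_{n\ge1}(E_n\sqcup F_n)$, i.e.\ the sweep exhausts $X$ in countably many steps.

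Next I would identify the pieces. For $F_n$: the set $A_n$ of $x\in N_{n-1}$ with $p_0(x)=v_n$ and $p_m(x)\ne v_n$ for all $m\ge1$ is wandering (immediate from the definition of $p_m$), the set of $x$ meeting $v_n$ only finitely often forward is $\bigcup_{k\in\ZZ}\sigma^{-k}A_n$, and symmetrically for the backward direction; hence $F_n$, and therefore $W:=\bigsqcup_n F_n$, is weakly wandering. For $E_n$: I claim it is Borel isomorphic to the loop system $\Sigma(L_n)$, where $L_n$ is the loop graph with, for each $\ell\ge1$, exactly as many loops of length $\ell$ as there are first-return loops of length $\ell$ to $v_n$ inside $G\setminus\{v_1,\dots,v_{n-1}\}$ (a countable, possibly infinite, number). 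For $x\in E_n$ the successive visit times $\cdots<t_{-1}(x)<t_0(x)\le0<t_1(x)<\cdots$ to $v_n$ are Borel functions of $x$; between consecutive visits $x$ traverses a first-return loop to $v_n$; fixing a length-preserving bijection between these loops and the loops of $L_n$, I reassemble the corresponding point $\Phi(x)\in\Sigma(L_n)$, positioned so that $\Phi(x)$ sits at the base vertex of $L_n$ exactly at the times $t_i(x)$. One then checks that $\Phi$ is $\sigma$-equivariant, Borel, injective, and surjective --- surjectivity because every point of $\Sigma(L_n)$ visits the base vertex infinitely often in both directions, each non-base vertex of a loop graph lying on a unique simple loop --- hence a Borel isomorphism. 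Putting this together, $X=W\sqcup\bigsqcup_n E_n$ is Borel isomorphic to $W\sqcup\bigsqcup_n\Sigma(L_n)$, which is the required form (pad the family with copies of the empty loop graph to index it by $\NN$ when $G$ is finite).

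I expect the only real subtleties to be bookkeeping ones. The first is the verification that each $F_n$ is weakly wandering: here one uses the simple fact that the set of points having a \emph{last} visit to $v_n$ at a prescribed time is wandering, and that $F_n$ is a countable union of shifts of such sets. The second is making sure the sweep does terminate, which is exactly the observation that every path occupies some vertex at time $0$, so no point survives to ``stage $\infty$''. The identification $E_n\cong\Sigma(L_n)$ is the classical loop (renewal) presentation of a Markov shift on its recurrent part, and calls only for the routine check that the reassembly map $\Phi$ is Borel and onto.
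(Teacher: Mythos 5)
Your proposal is correct and is essentially the paper's own argument: the paper likewise takes $L_n$ to be the loop graph of first-return loops at vertex $n$ avoiding the vertices $k<n$, sends each point to the loop system of the least vertex it visits infinitely often in both directions while avoiding all smaller vertices, and collects everything else into shifts of the wandering sets of points making a last (or first) visit to a given vertex at time $0$. Your sweep $N_{n-1}=E_n\sqcup F_n\sqcup N_n$ is just a reorganization of the paper's three-case analysis, and your explicit renewal map $\Phi$ fills in the identification $E_n\cong\Sigma(L_n)$ that the paper leaves implicit.
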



\begin{lemma}\label{lem:RedSFT}
Let $\Sigma$ be a loop system and $h>h(\Sigma)$. Then there is a SFT $S$ with $h(S)<h$ such that $\Sigma$ has a continuous embedding into $S$.
\end{lemma}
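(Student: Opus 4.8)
\emph{Plan.} The plan is to reduce the lemma to a statement about finite graphs and then prove that by an explicit construction. Write $\Sigma=\Sigma_f$ and $h(\Sigma)=\log\lambda$; one may assume $\lambda>1$, since if $\lambda=1$ then $f$ is a single monomial, $\Sigma$ is one periodic orbit, and it embeds into any SFT of small positive entropy. From $0<f(1/\lambda)\le 1$ one gets $f_n\le\lambda^n$ for every $n$, and since $\lambda e^{-h}<1$ one has $f(e^{-h})=\sum_n(f_n\lambda^{-n})(\lambda e^{-h})^{n}\le(\lambda e^{-h})f(1/\lambda)\le\lambda/e^h<1$; this is the only ``entropy budget'' that will matter.

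First I claim it suffices to construct a finite directed graph $H$ with a distinguished vertex $v$ such that (a) $h(\Sigma(H))<h$, and (b) for every $n\ge 1$ the number $R_n$ of first-return loops at $v$ of length $n$ (paths from $v$ to $v$ meeting $v$ only at their ends) is at least $f_n$. Given such $H$, I would fix for each simple loop $\ell$ of $\Sigma$, of length $n$, a distinct first-return loop $c(\ell)$ at $v$ of the same length, and send $x\in\Sigma$ — which, as a bi-infinite path in the loop graph of $f$, decomposes locally and canonically into simple loops $x=\cdots\ell_{-1}\ell_0\ell_1\cdots$ — to $\varphi(x)=\cdots c(\ell_{-1})c(\ell_0)c(\ell_1)\cdots\in\Sigma(H)$. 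Since $c$ preserves lengths, $\varphi$ commutes with the shift; it is continuous because the loop-decomposition of a point of $\Sigma$ is a $1$-block operation; and it is injective because $\varphi(x)$, being a concatenation of first-return loops at $v$, recovers from its edge sequence its visits to $v$, hence its own first-return decomposition $\cdots c(\ell_{-1})c(\ell_0)c(\ell_1)\cdots$, hence (via $c^{-1}$) the point $x$. So $S:=\Sigma(H)$ is the desired SFT; it is in fact irreducible and can be made mixing at no cost.

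For the construction I would fix $\mu=\sqrt{\lambda e^h}\in(\lambda,e^h)$ and a large integer $L$. Let $G_0$ be the finite loop graph with base vertex $b$ carrying, for each $m$ with $L\le m<2L$, a number $g_m\ge 1$ of simple loops of length $m$ with $g_m$ near $\mu^m/L$ (for $L$ large this lies between $\lambda^m$ and $e^{hm}$), chosen so that $\sum_m g_m\mu^{-m}$ is close enough to $1$ that the entropy of $\Sigma(G_0)$ lies in $(\log\lambda,h)$; set $g(z)=\sum_m g_m z^m$. In $G_0$ single out one length-$L$ loop $\ell^*$, let $v$ be the vertex of $\ell^*$ adjacent to $b$ on the outgoing side (so the only edge into $v$ is $b\to v$), and form $H$ by adjoining at $v$, through fresh vertices, exactly $f_n$ pairwise disjoint simple loops of length $n$ for each $n$ with $1\le n\le L^2+1$ — finitely many in all. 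A first-return loop at $v$ in $H$ is then either one of these short loops, or else it runs from $v$ around the rest of $\ell^*$ to $b$, performs an $\ell^*$-avoiding loop-word at $b$ of some length $k\ge 0$, and returns $b\to v$; the generating function of the latter is $z^{L}/(1-g^\dagger(z))$ with $g^\dagger=g-z^{L}$, so $R_n=f_n+[z^{n-L}](1-g^\dagger(z))^{-1}$ for $n\le L^2+1$ and $R_n=[z^{n-L}](1-g^\dagger(z))^{-1}$ for $n>L^2+1$. For $n\le L^2+1$, (b) is immediate. For $n>L^2+1$ the integer $n-L$ is large and is a sum of lengths in $[L,2L)$, and a direct estimate (using that the reservoir contains loops of every length in $[L,2L)$, together with $\mu^\dagger>\lambda$, where $\mu^\dagger$ is the reciprocal of the least root of $g^\dagger(z)=1$) gives $[z^{n-L}](1-g^\dagger(z))^{-1}\ge f_n$ once $L$ was chosen large enough. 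For (a): $h(\Sigma(H))=\log\beta$ with $\beta$ determined by $\sum_{n\le L^2+1}f_n\beta^{-n}+\beta^{-L}(1-g^\dagger(\beta^{-1}))^{-1}=1$; evaluating the left side at $e^{-h}$ gives at most $f(e^{-h})+e^{-hL}(1-g(e^{-h}))^{-1}$, and since $g(e^{-h})\to 0$ and $e^{-hL}\to 0$ as $L\to\infty$ while $f(e^{-h})\le\lambda/e^h<1$, this is $<1$ for $L$ large; hence $\beta<e^h$.

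\emph{Main obstacle.} The only real difficulty is meeting (a) and (b) together. Condition (b) forces the reservoir part of $H$ to produce $\gtrsim\lambda^n$ first-return loops of every large length $n$, which in a finite graph is possible only if $v$ lies in a region of entropy strictly above $\log\lambda$, and the danger is that this extra structure lifts $h(\Sigma(H))$ above $h$. The device that resolves this is to bury $v$ at depth $\approx L$ inside the reservoir loop $\ell^*$, rather than linking $v$ to an auxiliary high-entropy graph by a short bridge: then every reservoir first-return loop has length $\ge L$, so the reservoir contributes to $\sum_n R_n e^{-hn}$ only through the factor $e^{-hL}$ and is negligible, leaving just $f(e^{-h})<1$ to control. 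By contrast a bounded bridge would contribute a fixed amount of order $e^{-2h}$, which added to $f(e^{-h})$ can exceed $1$ precisely when $h$ is close to $\log\lambda$ — the delicate case. The remaining points (representability of $n-L$ by lengths in $[L,2L)$ for $n>L^2+1$, and the elementary inequalities by which the $L^2$ cutoff beats the subexponential error terms) are routine bookkeeping.
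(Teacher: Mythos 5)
Your proof is correct, and it rests on the same basic strategy as the paper's --- append to an initial segment of the loops of $f$ a finite reservoir of many \emph{long} loops, long enough that their contribution to the relevant generating function at $e^{-h}$ is negligible, and then embed $\Sigma_f$ loop-by-loop --- but the execution differs at both delicate points. For the entropy bound, the paper forms the finite loop system $\sigma_p$ with $p=f^{(N)}+2g^{<N>}$, $g_n=\lceil e^{n\beta}\rceil$ for $N<n\le 2N$, and controls $h(\sigma_p)$ by applying Rouch\'e's theorem to $1-f^{(N)}-2g^{<N>}$; you instead read the entropy of $H$ off the first-return generating function at $v$ evaluated at $e^{-h}$, which is more elementary and makes transparent why placing $v$ at return-depth $L$ inside $\ell^*$ reduces the reservoir's contribution to $O(e^{-hL})$, leaving only $f(e^{-h})\le \lambda e^{-h}<1$ to beat. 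For absorbing the tail of $f$ beyond the cutoff, the paper splits $p=h+k$ with $k=g^{<N>}$, realizes $\sigma_q$ with $q=h(1+k+k^2+\cdots)$ inside $\sigma_p$, and checks $f_n\le q_n$ coefficientwise; you instead count first-return loops at $v$, which is where you owe the one piece of real work: the lower bound $[z^{n-L}](1-g^\dagger(z))^{-1}\ge f_n$ for $n>L^2+1$. That bound does hold --- sums of lengths from $[L,2L)$ cover every integer $\ge 2L$, and a single composition of $n-L$ into $j\le (n-L)/L$ such parts already contributes $\prod_i g^\dagger_{m_i}\ge \mu^{\,n-L}(2L)^{-n/L}$, which exceeds $\lambda^{\,n}=\lambda^{\,n-L}\lambda^{\,L}$ once $n>L^2$ because $\log\mu-\log\lambda-\tfrac{1}{L}\log(2L)$ is bounded below by a positive constant for large $L$ --- but it must be written out, as it is precisely the estimate the paper's $h(1+k+k^2+\cdots)$ identity is designed to sidestep. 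Your preliminary reduction (a pointed SFT whose first-return loop counts at $v$ dominate the $f_n$ receives a continuous $1$-block embedding of $\Sigma_f$) is clean and correct, and is implicitly what drives the paper's step $\sigma_f\to\sigma_q$ as well. One harmless slip: after attaching the short loops, $v$ receives their terminal edges too, so ``the only edge into $v$ is $b\to v$'' is true in $G_0$ but not in $H$; your enumeration of the first-return loops at $v$ is nonetheless correct as stated, since it only uses the structure of the edges \emph{leaving} $v$ and the fact that $v$ lies on no reservoir loop other than $\ell^*$.
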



%
Before proving the lemmas, we deduce 
the lower-entropy embedding theorem from them.

\begin{proof}[Proof of Theorem \ref{MSEmbed1}]
According to 
Remark \ref{lem:rug} and Lemma \ref{lem:RedLoop},
we may assume that $X$ is a disjoint union of loop systems 
$\Sigma (L_n)$. 
Let $h=(h(Y) +h(X))/2>h(X)$. 
By Lemma \ref{lem:RedSFT}, each loop system $\Sigma (L_n)$ can be 
(continuously) embedded into some SFT $W_n$ with entropy less than
$h$.
Let $\epsilon =h(Y)-h>0$. 
By Lemma \ref{lem:disjSFT}
(with $\epsilon = (h(Y)-h))/2)$),
 there are pairwise disjoint mixing SFTs $Y_n$ in
$Y'$ with $h(Y_n)> h$. 
Finally, Corollary \ref{coro:BoyleEmbed} shows that each
 $W_n$ (apart from finitely many periodic points) 
can be Borel embedded into $Y_n\subset Y$. Altogether, 
apart from a countable set of periodic points, $X$ has 
been Borel embedded into $Y$. 
\end{proof}

We now prove the lemmas.

\begin{proof}[Proof of Lemma \ref{lem:RedLoop}]
Let $G$ be some graph presenting $X$. For convenience, we identify its vertices with $1,2,\dots$. Observe that each $W^{\eps}_n:=\{x\in X:x_0=n$ and $\forall i>0\; x_{\eps i}\ne n\}$ ($n\in\NN^*,\eps\in\{-1,+1\}$) is wandering. Consider the loop graphs $L_n$ defined by the first return loops of $G$ at vertex $n$ which avoid the vertices $k<n$.

For each $x\in X$, let $N:=\inf\{n\geq1:\exists a_k,b_k\to\infty\;
x_{-a_k}=x_{b_k}=n\}$ and consider the following three cases. 
\begin{enumerate}
\item 
$N=\infty$. Then  there exists $\eps\in\{-1,+1\}$ such that 
$x\in\sigma^{-j}W_{x_0}^\eps$, where 
$j:=\eps\sup\{\eps i\in\ZZ:x_i=x_0\}\in\ZZ$. 
\item 
 $N<\infty$ and $\{x_m : m \in \ZZ \}\cap [1,N) \neq \varnothing$.
   Then  there exist $k\in [1,N)$ 
and $\eps\in\{-1,+1\}$ such that
$j:=\eps\sup\{\eps i\in\ZZ:x_i=k\}\in\ZZ$, so
 $x\in\sigma^{-j}W_{k}^\eps$. 
\item
Otherwise, $x\in\Sigma(L_N)$.
\end{enumerate}
To conclude, observe that $\bigcup_{k\in\NN^*,j\in\ZZ,\eps\in\{-1,+1\}} \sigma^{-j}W_k^{\eps}$ is a weakly wandering set.
\end{proof}

\begin{proof}[Proof of Lemma \ref{lem:RedSFT}] 

Let $\Sigma = \Sigma_f$, a loop system described by a power series  
$f = \sum_{n=1}^{\infty} f_nz^n$. 
 If $f$ is a polynomial, then $\Sigma_f$ is itself
an SFT. 
From now on, we assume $f$ to have infinitely many non-zero terms.

We are going to build the SFT as a finite loop system $\Sigma_p$, with a polynomial
$p$ obtained by truncating the power series $f$ and then adding some monomials to ensure 
enough space for the embedding while keeping the entropy $<h$.

Let $\beta\in(h(\Sigma),h)$. Given a positive integer 
$N$, let $f^{(N)}$ denote the truncation of $f$ to the 
polynomial $f_1z + f_2z^2 + \cdots + f_Nz^N$. 
As $f(e^{-h(\Sigma)})\leq1$ and $h(\Sigma) < \beta$ we have 
$f_n <e^{n\beta}$ for all $n\geq1$.
Let $g^{<N>}$ denote the polynomial 
$g_{N+1}z^{N+1}+ g_{N+2}z^{N+2}+ \cdots + g_{2N}z^{2N}$, 
where $g_n=\lceil e^{n\beta}\rceil$ (the integer ceiling). Then 
\begin{align*} 
|g^{<N>}(z)| 
&\leq \Big[(e^{(N+1)\beta} +1) +\cdots + (e^{2N\beta} +1)|z|^{N-1}\Big]|z|^{N+1} \\
 &= e^{(N+1)\beta}|z|^{N+1}\Bigg[\frac{1-(e^\beta |z|)^N}{1-e^\beta |z|}\Bigg] 
+ |z|^{N+1} \Bigg[\frac{1-| z|^N}{1- |z|}\Bigg] \ . 
\end{align*}
As $\beta>0$, we see that $\lim_{N\to\infty} g^{<N>}(z)=0$ uniformly for $|z|$ fixed, smaller
than $e^{-\beta}$.

Recall that $f(r)<1$ for $r<e^{-h(\Sigma)}$. Also if $r>0$ and $|z|=r$ and 
$f^{(N)}(r)<f(r)< 1$, then $|1-f^{(N)}(z)| \geq 1-f^{(N)}(r)> 1-f(r)>0$. 
Fix some $\gamma\in(\beta,h)$ and 
then $N$ sufficiently large that the following hold: 
\begin{enumerate} 
\item 
$|2g^{<N>}(z)| <1 -f(e^{-\gamma}) <  1-f^{(N)}(e^{-\gamma}) \leq 
|1-f^{(N)}(z)|$;
\item 
both $1-f^{(N)}(z)$ and $1-f^{(N)}(z)-2g^{<N>}(z)$ are non-zero. 
\end{enumerate} 
It follows from Rouch\'e's Theorem that $1-f^{(N)}$ and 
$1-f^ {(N)}-2g^{<N>}$ have the same number of zeros inside the 
circle $|z|=e^{-\gamma}$, i.e. no zeros. Thus, setting $p:=f^ {(N)}+2g^{<N>}$,
we get $h(\sigma_p) < \gamma < h$.

Now, set $k=g^{<N>} $ and
split $p$ as $p=(f^{(N)}+ g^{<N>}) + g^{<N>} =: h+k$  and 
let $q := h(1+ k+ k^2 + \cdots )$.  $\sigma_q$ is the loop system defined from $\sigma_{h+k}$
by replacing the loops from $k$ by all the loops made by concatenating a copy of a loop from $h$
with an arbitrary positive number of copies of loops from $k$
 (see \cite[Lemma 5.1]{BBG2006} for detail).
It follows that $\sigma_q$ can
be identified to the subset of $\sigma_p$ obtained by removing a copy of $\sigma_k$ with the points
asymptotic to it. 
Hence, there is a continuous embedding 
of $\sigma_q$ into $\sigma_p$. 

Note that for $n\leq N$ we have $f_n=p_n=q_n$. 
Also, for $n> N$, $f_n <e^{n\beta}\leq (1+k+k^2+\cdots )_n \leq
q_n$. This yields an embedding $\sigma_f \to \sigma_q$ 
and concludes the proof.
\end{proof}


\section{The SPR case}

We now give  the proof of Theorem \ref{MSEmbed2}. 
 Let $X'$ be the mixing SPR component
of $X$ with $h(X)= h(Y)$. Equal entropy mixing SPR Markov shifts 
are {\it almost isomorphic} as defined 
and proved in \cite{BBG2006}. 
Consequently there will be a word $w$ and a subsystem 
$\Sigma^w$ of $X'$ (consisting of the points which see 
$w$ infinitely often in the past and in the future) such that 
there is a continuous embedding 
$\psi_0$ from $X_0=\Sigma^w$ onto a subsystem 
$Y_0$ of $Y$ and $\epsilon >0$ such that the complements 
$ X'\setminus X_0$ and $Y\setminus Y_0$   
have Borel entropy less than $h(Y)-\epsilon$.  


The Borel subsystem  $ X\setminus X_0$ is (after passing to a higher block
presentation) the union of 
a Markov shift $X_1$ 
 (the subsystem of $X$ avoiding the word $w$) and a weakly
wandering set $W$ (defined by the occurence of $w$, with a 
failure of infinite recurrence in the past or future). 
By
Remark \ref{lem:rug}, we can
forget about $W$. 
We cannot expect $X_1$ to have entropy less than 
$h(Y\setminus Y_0)$, 
  and therefore we cannot apply 
Theorem \ref{MSEmbed1} to embed $ X_1$ 
into a subsystem of $Y\setminus Y_0$. Instead,  
we will push  $X_1$ into the image of $X_0$,
 and adjust the definition on $X_0$ to 
keep injectivity.

For $L$ large enough, 
\[\Sigma^{w,L}:=\{x\in\Sigma:\forall n\in\ZZ\ \exists
k\in\{0,\dots,L\}\; x_{n+k}\dots x_{n+k+|w|-1}=w\}\]
 is a mixing Markov subshift with 
$h(\Sigma^{w,L}) >h(X_1)$. We apply 
Lemma \ref{lem:disjSFT}
 to 
get pairwise disjoint mixing SFTs $Y_1,Y_2, \dots $ 
in $\Sigma^{w,L}$ satisfying $h(Y_i)>h(X_1)$ for all $i\in\NN$.

Let $C$ denote the complement in $X_1$ of the periodic points. 
Theorem \ref{MSEmbed1} gives Borel embeddings 
$\gamma_i: C\to Y_{i}$. 
Let $Z_i:=\gamma_i(C)\subset Y_i$ 
 and 
let $\phi_i$ be the conjugacy $\gamma_{i+1}\circ
  \gamma_i^{-1}:Z_{i}\to Z_{i+1}$.
We define $\psi: X_0\cup C \to\Sigma'$ by 
\begin{align*} 
\psi : x \ &\mapsto\ \gamma_1(x) \in Z_1 \quad \quad \quad \quad \textnormal{if } x\in C \\
&\mapsto \ \phi_i (\psi_0(x) ) \in Z_{i+1}\quad \ \textnormal{if } \psi_0( x)\in
Z_i\\ 
&\mapsto \ \psi_0 (x) \quad \quad \quad \quad \quad \quad \, \text{otherwise} \ .
\end{align*}
%
This  $\psi$ 
is a Borel embedding. 
This finishes the proof of Theorem \ref{MSEmbed2}. \qed 

Lastly we record the obvious corollary of Theorem \ref{spriso}.  
\begin{theorem}
The free parts of two irreducible SPR Markov shifts are Borel isomorphic 
if and only 
if they have the same  entropy and period.  
\end{theorem}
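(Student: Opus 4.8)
The plan is to deduce this from the mixing case, Theorem \ref{spriso}, via the cyclic decomposition of an irreducible Markov shift, together with a separate check that entropy and period are invariants of the Borel-isomorphism type of the free part. I will assume throughout that the entropy is positive: when it is $0$, an SPR Markov shift (mixing or not) is positively recurrent with $\lambda=1$, hence reduces to a single periodic orbit, so both free parts are empty and there is nothing to prove.

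\emph{Necessity.} Since every SPR Markov shift is positively recurrent, each shift $\Sigma$ in question carries its (unique, and nonatomic since $h>0$) measure of maximal entropy $\mu$, which is supported on the free part $F$. A Borel isomorphism $F\to F'$ pushes invariant Borel probabilities forward to invariant Borel probabilities preserving Kolmogorov--Sinai entropy; in particular it carries $\mu$ to the corresponding maximal measure $\mu'$, and it yields $h(\Sigma)=\sup_{\nu\in\Prob(F)}h(\sigma,\nu)=\sup_{\nu\in\Prob(F')}h(\sigma',\nu)=h(\Sigma')$ (the supremum over all invariant measures is unchanged by deleting the periodic orbits, which carry zero entropy). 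For the period, recall that $(\sigma,\mu)$ is measure-preservingly isomorphic to the product of a Bernoulli shift with the rotation on $\ZZ/p\ZZ$, where $p$ is the period of $\Sigma$; hence $(\sigma^{n},\mu)$ has exactly $\gcd(n,p)$ ergodic components, so $p$ is an invariant of the measure-preserving isomorphism class of $(\sigma,\mu)$, which by the previous sentence is an invariant of the Borel-isomorphism class of $F$.

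\emph{The cyclic decomposition.} Let $\Sigma$ be irreducible of period $p$. In a graph presentation the vertex set splits as $V_0\sqcup\cdots\sqcup V_{p-1}$ with every edge running from some $V_i$ to $V_{i+1 \bmod p}$; let $\Sigma_i\subset\Sigma$ be the clopen set of points whose $0$-coordinate edge starts in $V_i$. Then $\sigma\Sigma_i=\Sigma_{i+1\bmod p}$, the system $(\Sigma_0,\sigma^p)$ is a \emph{mixing} Markov shift with $h(\sigma^p|_{\Sigma_0})=p\,h(\sigma)$, and intersecting the sets $\Sigma\setminus\bigcup_{j<pn}\sigma^{-j}U$ with $\Sigma_0$ (and using $\sigma^{-j}U\subset\Sigma_{-j\bmod p}$) shows, by a short computation with the exponential-recurrence inequality as in \cite{BBG2006}, that $\Sigma$ is SPR if and only if $(\Sigma_0,\sigma^p)$ is. Moreover the identity identifies the $\sigma$-periodic points of $\Sigma$ lying in $\Sigma_0$ (which have $\sigma$-period a multiple of $p$) with the $\sigma^p$-periodic points of $\Sigma_0$, hence also identifies the free part of $(\Sigma_0,\sigma^p)$ with $F\cap\Sigma_0$.

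\emph{Lifting the isomorphism, and the main point.} Suppose now $\Sigma$ and $\Sigma'$ are irreducible SPR of the same entropy $h$ and the same period $p$. Then $(\Sigma_0,\sigma^p)$ and $(\Sigma_0',(\sigma')^p)$ are mixing SPR Markov shifts of equal entropy $ph$, so Theorem \ref{spriso} provides a Borel bijection $\theta\colon F\cap\Sigma_0\to F'\cap\Sigma_0'$ with $\theta\circ\sigma^p=(\sigma')^p\circ\theta$. Define $\Theta\colon F\to F'$ by $\Theta(x)=(\sigma')^i\,\theta(\sigma^{-i}x)$ for $x\in F\cap\Sigma_i$ with $0\le i<p$; since $\sigma^{-i}x\in F\cap\Sigma_0$ this is well defined, it is Borel, it is invertible (apply the same recipe to $\theta^{-1}$), and checking the two cases $0\le i<p-1$ and $i=p-1$ (the latter via $\theta\sigma^p=(\sigma')^p\theta$) gives $\Theta\circ\sigma=\sigma'\circ\Theta$, so $\Theta$ is the required isomorphism of $F$ onto $F'$. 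The real content of the statement is already contained in Theorem \ref{spriso}; the only steps needing care are the two small verifications above — that the SPR property passes between $\Sigma$ and $(\Sigma_0,\sigma^p)$, and that the period is read off the free part through the maximal measure — and I expect the latter to be the one a reader would most want spelled out, since it is exactly where positive recurrence enters the necessity direction.
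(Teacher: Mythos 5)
Your proof is correct and is exactly the deduction the paper intends when it records this statement without proof as ``the obvious corollary'' of Theorem \ref{spriso}: pass to the cyclic decomposition, apply the mixing case to $(\Sigma_0,\sigma^p)$, and recover entropy and period from the free part via the measure of maximal entropy. The only caveat concerns the zero-entropy case you set aside: there both free parts are empty, so the ``only if'' direction in fact \emph{fails} for two periodic orbits of different lengths (which are SPR under the paper's definition), and the statement should be read as implicitly assuming positive entropy rather than as having ``nothing to prove.''
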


\bibliographystyle{plain} 
\bibliography{b-HAL}

\begin{thebibliography}{10}

\bibitem{Boyle1983}
Mike Boyle.
\newblock Lower entropy factors of sofic systems.
\newblock {\em Ergodic Theory Dynam. Systems}, 3(4):541--557, 1983.

\bibitem{BB2014}
Mike Boyle and Jerome Buzzi.
\newblock The almost {B}orel structure of surface diffeomorphisms, {M}arkov
  shifts and their factors.
\newblock {\em in preparation}, 2014.

\bibitem{BBG2006}
Mike Boyle, Jerome Buzzi, and Ricardo G{\'o}mez.
\newblock Almost isomorphism for countable state {M}arkov shifts.
\newblock {\em J. Reine Angew. Math.}, 592:23--47, 2006.

\bibitem{DGS}
Manfred Denker, Christian Grillenberger, and Karl Sigmund.
\newblock {\em Ergodic theory on compact spaces}.
\newblock Lecture Notes in Mathematics, Vol. 527. Springer-Verlag, Berlin-New
  York, 1976.

\bibitem{Fiebigs1997}
Doris Fiebig and Ulf-Rainer Fiebig.
\newblock Entropy and finite generators for locally compact subshifts.
\newblock {\em Ergodic Theory Dynam. Systems}, 17(2):349--368, 1997.

\bibitem{Fiebigs2005}
Doris Fiebig and Ulf-Rainer Fiebig.
\newblock Embedding theorems for locally compact {M}arkov shifts.
\newblock {\em Ergodic Theory Dynam. Systems}, 25(1):107--131, 2005.

\bibitem{Gurevic1970}
B.~M. Gurevi{\v{c}}.
\newblock Shift entropy and {M}arkov measures in the space of paths of a
  countable graph.
\newblock {\em Dokl. Akad. Nauk SSSR}, 192:963--965, 1970.

\bibitem{Gurevich1996}
B.~M. Gurevich.
\newblock Stably recurrent nonnegative matrices.
\newblock {\em Uspekhi Mat. Nauk}, 51(3(309)):195--196, 1996.

\bibitem{GurevichSavchenko}
B.~M. Gurevich and S.~V. Savchenko.
\newblock Thermodynamic formalism for symbolic {M}arkov chains with a countable
  number of states.
\newblock {\em Uspekhi Mat. Nauk}, 53(2(320)):3--106, 1998.

\bibitem{Hochman}
Michael Hochman.
\newblock Isomorphism and embedding of {B}orel systems on full sets.
\newblock {\em Acta Appl. Math.}, 126:187--201, 2013.

\bibitem{Kitchens1998}
Bruce~P. Kitchens.
\newblock {\em Symbolic {D}ynamics}.
\newblock Universitext. Springer-Verlag, Berlin, 1998.
\newblock One-sided, two-sided and countable state Markov shifts.

\bibitem{KriegerGenerator}
Wolfgang Krieger.
\newblock On entropy and generators of measure-preserving transformations.
\newblock {\em Trans. Amer. Math. Soc.}, 149:453--464, 1970.

\bibitem{KriegerEmbedding}
Wolfgang Krieger.
\newblock On the subsystems of topological {M}arkov chains.
\newblock {\em Ergodic Theory Dynam. Systems}, 2(2):195--202 (1983), 1982.

\bibitem{LindMarcus1995}
Douglas Lind and Brian Marcus.
\newblock {\em An {I}ntroduction to {S}ymbolic {D}ynamics and {C}oding}.
\newblock Cambridge University Press, Cambridge, 1995.

\bibitem{ShelahWeiss}
Saharon Shelah and Benjamin Weiss.
\newblock Measurable recurrence and quasi-invariant measures.
\newblock {\em Israel J. Math.}, 43(2):154--160, 1982.

\bibitem{Weiss1}
Benjamin Weiss.
\newblock Measurable dynamics.
\newblock In {\em Conference in modern analysis and probability ({N}ew {H}aven,
  {C}onn., 1982)}, volume~26 of {\em Contemp. Math.}, pages 395--421. Amer.
  Math. Soc., Providence, RI, 1984.

\bibitem{Weiss2}
Benjamin Weiss.
\newblock Countable generators in dynamics---universal minimal models.
\newblock In {\em Measure and measurable dynamics ({R}ochester, {NY}, 1987)},
  volume~94 of {\em Contemp. Math.}, pages 321--326. Amer. Math. Soc.,
  Providence, RI, 1989.

\end{thebibliography}

\end{document}